\documentclass{amsart}
\usepackage[utf8]{inputenc}
\usepackage{amssymb,latexsym}
\usepackage{amsmath}
\usepackage{graphicx}
\usepackage{textcomp}

\usepackage{amsthm,amssymb,enumerate,graphicx, tikz}
\usepackage{amscd}
\usepackage{setspace}
\usepackage{comment}
\usepackage{hyperref}
\usepackage{cleveref}
\usepackage{subcaption}

\newtheorem{theorem}{Theorem}[section]

\newtheorem*{theorem*}{Theorem}

\newtheorem{claim}[theorem]{Claim}

\theoremstyle{definition}
\newtheorem{definition}[theorem]{Definition}

\theoremstyle{remark}
\newtheorem{remark}[theorem]{Remark}
\newtheorem{example}[theorem]{Example}

\newcommand{\F}{\mathcal{F}}

\newcommand{\M}{\mathcal{M}}

\newcommand{\conv}{\textrm{conv}}
\newcommand{\supp}{\textrm{supp}}
\newcommand{\cl}{\textrm{cl}}

\newcommand{\I}{\mathcal{I}}

\newcommand{\PP}{\mathcal{P}}

\title{Matroid colorings of KKM covers}
\author{Daniel McGinnis}\thanks{D. McGinnis: Department of Mathematics, Princeton University, USA.  \url{dm7932@princeton.edu}. Supported by NSF award no. 2402145.} 

\begin{document}

\begin{abstract}
    We prove a KKM-type theorem for matroid colored families of set coverings of a polytope. This generalizes Gale's colorful KKM theorem as well as recent sparse-colorful variants by Sober\'on, and McGinnis and Zerbib. 
\end{abstract}

\maketitle

\section{Introduction}

The \textit{KKM theorem} by Knaster, Kuratowski, and Mazurkiewicz \cite{knaster1929} is a theorem about set coverings of the simplex:

\begin{theorem}[The KKM Theorem \cite{knaster1929}]
If a family of closed subsets $(A_1,\dots,A_k)$ of the $(k-1)$-dimensional simplex 
$\Delta^{k-1} =\conv\{v_1,\dots,v_k\}$  satisfies $\sigma \subseteq \bigcup_{v_i \in \sigma} A_i$ for 
every face $\sigma$ of $\Delta^{k-1}$ (including for  $\sigma=\Delta^{k-1}$), then $\bigcap_{i=1}^k A_i \neq \emptyset$. 
\end{theorem}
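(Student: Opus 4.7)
The plan is to use the standard reduction to Sperner's lemma, exploiting the closedness of the sets together with compactness of $\Delta^{k-1}$.

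First I would recall Sperner's lemma: for any simplicial triangulation $T$ of $\Delta^{k-1}$ and any labeling $\ell \colon V(T) \to \{1, \dots, k\}$ of its vertices such that whenever a vertex lies on the face $\sigma = \conv\{v_i : i \in I\}$ its label lies in $I$, there exists a fully labeled simplex of $T$ (one whose vertices carry all $k$ labels). Given a KKM cover $(A_1, \dots, A_k)$, I would take a sequence of triangulations $T_n$ of $\Delta^{k-1}$ whose mesh tends to $0$. For each vertex $x$ of $T_n$, the KKM hypothesis guarantees that if $\sigma$ is the minimal face of $\Delta^{k-1}$ containing $x$, then $x \in A_i$ for some $i$ with $v_i \in \sigma$; I would assign such an $i$ as the label $\ell_n(x)$. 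This is by construction a Sperner labeling.

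Next, I would apply Sperner's lemma to obtain, for each $n$, a fully labeled simplex $\tau_n$ of $T_n$ with vertices $x_1^{(n)}, \dots, x_k^{(n)}$ where $x_i^{(n)} \in A_i$. Because $\Delta^{k-1}$ is compact, after passing to a subsequence I may assume that $x_1^{(n)} \to x^* \in \Delta^{k-1}$. Since the diameter of $\tau_n$ tends to zero, every $x_i^{(n)}$ also converges to $x^*$. Then closedness of each $A_i$ forces $x^* \in A_i$ for all $i$, so $x^* \in \bigcap_{i=1}^k A_i$, finishing the proof.

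The main obstacle (in the pedagogical sense, since the overall strategy is classical) is Sperner's lemma itself, whose usual combinatorial proof is a parity/induction argument on the number of $(k-1)$-subsimplices labeled $\{1,\dots,k-1\}$ on the face opposite $v_k$. I would either cite it as known or include the standard door-counting argument. Everything else is a clean compactness-and-closedness wrap-up, with no estimates that require care beyond making the mesh go to $0$.
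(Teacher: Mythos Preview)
Your argument is correct and is the classical Sperner-plus-compactness proof of the KKM theorem. The paper does not give a standalone proof of this statement; it simply states it with a citation and later remarks that it is recovered as the special case of the Main Theorem where $P=\Delta^{k-1}$, the matroid is trivial, and only vertex-indexed sets are nonempty. That said, the paper's proof of the Main Theorem follows the same architecture you use, scaled up: instead of Sperner's lemma it invokes a Sperner--Shapley labeling result (Theorem~\ref{sperner-shapley}), it builds admissible labelings on fine triangulations (Theorem~\ref{goodtriangulation}), and it finishes with the identical mesh-to-zero compactness argument. So your route is essentially the base case of the paper's method, and nothing is missing.
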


We call a family $(A_1,\dots,A_k)$ satisfying the conditions of the KKM theorem a \textit{KKM cover}. We note that the KKM theorem still holds if all the sets are open.

Notably, the KKM theorem is closely related to the Brouwer's fixed point theorem and Sperner's Lemma \cite{spernerslemma1928} in the sense that these statements can easily be deduced from one another. As with the other two theorems, the KKM theorem has numerous applications in different areas of mathematics. See \cite{mcginnis2024usingkkmtheorem} for a recent survey that discusses various such applications. 

Numerous generalizations and extensions of the KKM theorem have been proven over the past several decades, and the development of such results continue to be explored to this day. Notable such generalizations include Gale's colorful version of the KKM theorem \cite{galeequilibrium1984}, the KKMS theorem due to Shapley \cite{shapleybalanced1973}, a version for set coverings of general polytopes by Komiya \cite{Komiyasimple1994}, and, more recently, a colorful version of Komiya's theorem due to Frick and Zerbib \cite{floriancolorful2019}.

The the aforementioned notion of set covering for general polytopes present in Komiya's theorem generalizes KKM covers and is relevant for the statement of our main result, so we explicitly describe this type of set covering, which we  call a \textit{Komiya cover}, here. For a polytope $P$, we define $F(P)$ to be the set of nonempty faces of $P$.

\begin{definition}
    Let $P$ be a polytope and let $(A_\sigma \mid \sigma \in F(P))$ be a family of closed sets of $P$ such that for every face $\tau$ of $P$, we have that $\tau \subset \bigcup_{\sigma\subset \tau} A_\sigma$. Then $(A_\sigma \mid \sigma \in F(P))$ is called a \textit{Komiya cover} of $P$.
\end{definition}

Note that a KKM cover of a simplex can be regarded as a Komiya cover where only the sets in the cover indexed by vertices are nonempty.

A new and exciting direction along the vein of these KKM type theorems was taken by Sober\'on \cite{soberon2022fair} who proved the following sparse-colorful version of the KKM theorem.

\begin{theorem}[Sober\'on \cite{soberon2022fair}]\label{thm:sparseKKM}
    Let $n\geq k$ and suppose we have $n$ families of closed subsets of $\Delta_{k-1}$, $(A_1^j,\dots,A_k^j)$ for $j\in [n]$, such that for any $I\subset \binom{[n]}{n-k+1}$, the family 
    \[
    \left(\bigcup_{j\in I} A_1^j,\dots,\bigcup_{j\in I} A_k^j\right)
    \]
    is a KKM cover. Then there exists an injective function $\pi: [k] \rightarrow [n]$ such that $\bigcap_{i=1}^k A_i^{\pi(i)} \neq \emptyset$.
\end{theorem}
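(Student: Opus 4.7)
The plan is to argue by contradiction via a Sperner-type labeling of $\Delta^{k-1}$. Assume no injection $\pi : [k] \to [n]$ satisfies $\bigcap_{i=1}^k A_i^{\pi(i)} \neq \emptyset$. For each $x \in \Delta^{k-1}$, let $G(x)$ denote the bipartite graph on vertex classes $[k] \sqcup [n]$ whose edges are the pairs $(i,j)$ with $x \in A_i^j$. Our assumption is equivalent, via Hall's marriage theorem, to the statement that for every $x$ there exists $S(x) \subseteq [k]$ with $|N_{G(x)}(S(x))| < |S(x)|$.

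The first step is to convert the sparse-KKM hypothesis into a constraint on these deficient sets. Writing $B_i^I := \bigcup_{j \in I} A_i^j$, the hypothesis says that $(B_1^I, \dots, B_k^I)$ is a KKM cover of $\Delta^{k-1}$ for every $I \in \binom{[n]}{n-k+1}$, so applying KKM to the minimal face containing $x$ (whose vertex set is $\supp(x) = \{i : x_i > 0\}$) yields some $i \in \supp(x)$ with $x \in B_i^I$, i.e.\ $I \cap N_{G(x)}(\supp(x)) \neq \emptyset$. Since this holds for every $(n-k+1)$-subset $I$, the complement of $N_{G(x)}(\supp(x))$ in $[n]$ has size at most $k-1$, giving $|N_{G(x)}(\supp(x))| \geq k$. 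Consequently any deficient $S(x)$ must fail to contain $\supp(x)$, and we may define a Sperner-valid label $\ell(x) \in \supp(x) \setminus S(x)$.

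Applying Sperner's lemma to a sequence of triangulations of $\Delta^{k-1}$ with mesh tending to $0$ and this labeling, compactness lets us pass to a subsequence whose rainbow simplices converge to a single point $x^* \in \Delta^{k-1}$; for each $i \in [k]$ we obtain a sequence of triangulation vertices $v_m^i \to x^*$ with $\ell(v_m^i) = i$. To extract actual matching data at $x^*$, I would enrich the labeling: at each vertex $v$, take $S(v) = [k] \setminus C_1(v)$ for a minimum König vertex cover $C(v) = C_1(v) \cup C_2(v)$ of $G(v)$, and additionally record an index $j(v) \in [n] \setminus C_2(v)$ with $v \in A_{\ell(v)}^{j(v)}$. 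Such a $j(v)$ exists because minimality of $C(v)$ forces every $i \in C_1(v)$, and in particular $\ell(v)$, to have at least one edge to $[n] \setminus C_2(v)$. Passing to further subsequences so that $j(v_m^i) = j_i^*$ is constant in $m$ for each $i$, closedness of $A_i^{j_i^*}$ then gives $x^* \in A_i^{j_i^*}$ for every $i \in [k]$.

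The main obstacle is to show that the colors $j_1^*, \dots, j_k^*$ are pairwise distinct, so that $\pi(i) := j_i^*$ furnishes the desired injection and delivers the contradiction. Sperner's lemma in its raw form does not force this, and the distinctness must come from further structure, either by selecting the König witnesses $C(v)$ and $j(v)$ canonically and in a way compatible with the upper-semicontinuity of the set-valued map $x \mapsto G(x)$, or by upgrading the combinatorial tool, for instance applying a Ky Fan style labeling lemma that tracks the auxiliary labels $j(v) \in [n]$ alongside the $[k]$-valued Sperner label, or by recasting the entire problem as a suitable (colorful) Komiya cover on the product polytope $\Delta^{k-1} \times \Delta^{n-1}$ so that the colorful Komiya theorem of Frick and Zerbib~\cite{floriancolorful2019} outputs a matching-indexed intersection directly. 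Navigating this distinctness step cleanly is where the topological content of the proof resides.
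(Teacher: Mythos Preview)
Your proposal is honest about its own gap: you set up a Sperner labeling and a K\"onig-cover bookkeeping scheme, but you do not complete the step showing the extracted colors $j_1^*,\dots,j_k^*$ are pairwise distinct. That step is not a technicality; it is the entire content of the theorem beyond the ordinary colorful KKM, and none of the three fixes you sketch is carried out. In particular, choosing K\"onig witnesses ``canonically'' does not by itself force distinctness in the limit (the set-valued map $x\mapsto G(x)$ is only upper-semicontinuous, so minimum covers can jump), and a product-polytope Komiya formulation on $\Delta^{k-1}\times\Delta^{n-1}$ has the wrong dimension to yield $k$ intersecting sets indexed by a matching.

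The paper does not prove Theorem~\ref{thm:sparseKKM} directly; it deduces it from the main matroid theorem (Theorem~\ref{thm:main}) by taking $P=\Delta^{k-1}$ and $\M=U_{k,n}$, the uniform matroid whose bases are exactly the $k$-element subsets of $[n]$. The substantive work is Theorem~\ref{goodtriangulation}: given any triangulation $T$ with labels $\lambda:V(T)\to F(P)$ and $f:V(T)\to W$ satisfying (P1)--(P2), one algorithmically \emph{refines} $T$ so that on every maximal simplex the $f$-labels form a basis of $\M$. The mechanism is to locate a ``bad'' face $F$ (one whose $f$-labels form a circuit), insert its barycenter $b_F$, and label $b_F$ by some $w\notin\cl(f(V(F)))$; such $w$ exists precisely because the $\M$-Komiya hypothesis guarantees coverage by colors outside any rank-$(k-1)$ set. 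An inductive bookkeeping argument (Claims~\ref{claim:contains}--\ref{terminates}) shows this terminates without introducing new bad faces outside the controlled region. For $\M=U_{k,n}$, ``basis'' means ``$k$ distinct elements'', so the refined triangulation has distinct $f$-labels on every top simplex, and then Sperner--Shapley plus the standard limiting argument finishes. This triangulation-refinement idea---modifying the simplicial complex itself rather than hoping limit labels happen to be distinct---is the ingredient your outline is missing.
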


Theorem \ref{thm:sparseKKM} is colorful in the sense that we can think of each family $(A_1^j,\dots,A_k^j)$ as being colored by the index $j$, and it is sparse in the sense that not all colors are represented in the intersection of sets in the conclusion of the theorem (unless $n=k$). Soon after, a generalization of Theorem \ref{thm:sparseKKM} to set coverings of general polytopes was proven by McGinnis and Zerbib \cite{mcginnis2024sparse}.

When $n=k$, i.e. there are as many families as the dimension of the simplex that is being covered, Theorem \ref{thm:sparseKKM} recovers the aforementioned theorem of Gale. If in addition, we have that all the families are identical, we recover the original KKM theorem.

The main result of this paper is a common generalization of all previously mentioned extensions of the KKM theorem. In this result, we color families of sets by elements of a given \textit{matroid}. We provide some basic matroid theoretic definitions; see \cite{oxley2011} for a comprehensive treatment of matroids. 

\begin{definition}
    A matroid $\M$ consists of a set $W$ ($W$ will always be finite for our purposes), called the ground set, and a collection $\mathcal{I}$ of subsets of $W$ called the independent sets of $\M$ satisfying the following properties:
    \begin{enumerate}
        \item The empty set is independent, i.e. $\emptyset \in \mathcal{I}$,
        \item if $A\in \I$ and $B\subset A$, then $B\in \I$,
        \item if $A,B\in \I$ and $|A| > |B|$, then there exists an element $x\in A\setminus B$ such that $B\cup \{x\} \in \I$.
    \end{enumerate}
\end{definition}
One typical example of a matroid is given by taking a finite set of vectors $V$ from a vector space to be the ground set and taking the independent sets to consist of the sets of vectors that are independent in the usual linear-algebraic sense; such matroids are called realizable. Hence, matroids generalize the usual notion of independence in vector spaces, and in fact, almost all matroids are not realizable \cite{nelson2018nonrepresentable}. 

Given a matroid $\M$ with ground set $W$, the \textit{rank function} $r_\M: \PP(W) \rightarrow \mathbb{Z}_{\geq 0}$ is defined by the following rule: for $A\subset W$, $r_\M(A)$ is the size of the largest independent set contained in $A$. When the matroid $\M$ is understood from context, we may write $r$ instead of $r_\M$ for the rank function. The rank of $\M$ is defined to be $r_\M(W)$, i.e. the size of the largest independent set of $\M$, and such an independent set of largest size is called a \textit{basis}. We will introduce further notions and definitions as we need. The following matroid theoretic notion of a Komiya cover will be relevant to our main theorem. Recall that for a polytope $P$, we denote $F(P)$ to be the set of nonempty faces of $P$.

\begin{definition}[$\M$-Komiya cover]
    Let $P$ be a $(k-1)$-dimensional polytope and let $\M$ be a matroid with rank $k$ and ground set $W$. An $\M$-Komiya cover is a family of closed sets of $P$ 
    \[
    (A_\sigma^w \mid w \in W,\, \sigma \in F(P))
    \]
    such that for all $G\subset W$ with $r(W\setminus G)\leq k-1$,
    \[
    \left(\bigcup_{w\in G} A_\sigma^w \mid \sigma\in F(P) \right)
    \]
    is a Komiya cover of $P$.
\end{definition} 

\begin{theorem}[Main Theorem]\label{thm:main}
    Let $P$ be a $(k-1)$-dimensional polytope with $p\in P$, and let $\M$ be a matroid of rank $k$ on the ground set $W$. For each face $\sigma\in F(P)$ and each element $w\in W$, let $y_\sigma^w\in \sigma$ be a point and let $A^w_\sigma$ be a closed set such that
    \[
    (A_\sigma^w \mid w \in W,\, \sigma \in F(P))
    \]
    is an $\M$-Komiya cover. Then there exists a basis $\{w_1,\dots, w_k\}$ of $\M$ and faces $\sigma_1,\dots,\sigma_k$ such that $\bigcap_{i=1}^k A^{w_i}_{\sigma_i} \neq \emptyset$ and $p\in \conv(\{y_{\sigma_1}^{w_1},\dots,y_{\sigma_k}^{w_k}\})$.
\end{theorem}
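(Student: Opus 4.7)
The plan is to follow the topological template used in the proof of Komiya's theorem and its colorful descendants: take ever-finer triangulations of $P$, label vertices by pairs $(\sigma, w) \in F(P) \times W$, build a piecewise linear self-map of $P$ that must cover $p$, and use the matroid structure to force the labels on a suitable simplex to form a basis of $\M$. A compactness argument then extracts the claimed intersection in the limit.

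First I would establish a rank lemma. Fix a triangulation $T$ of $P$ refining the face structure, and for each vertex $v$ of $T$ let $\tau_v$ denote the minimal face of $P$ containing $v$. Setting
\[
L(v) := \{w \in W : v \in A^w_\sigma \text{ for some } \sigma \subseteq \tau_v\},
\]
the $\M$-Komiya hypothesis forces $r(L(v)) = k$: otherwise $G := W \setminus L(v)$ would satisfy $r(W \setminus G) = r(L(v)) \le k-1$, so by assumption $\bigl(\bigcup_{w \in G} A^w_\sigma\bigr)_{\sigma \in F(P)}$ would be a Komiya cover, and covering $v \in \tau_v$ would yield some $w \in G \cap L(v)$, a contradiction. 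Next, for each vertex $v$ I would choose a label $(\sigma_v, w_v)$ with $\sigma_v \subseteq \tau_v$, $w_v \in L(v)$, and $v \in A^{w_v}_{\sigma_v}$, and define a simplicial map $f_T : P \to P$ by $v \mapsto y^{w_v}_{\sigma_v}$, extended linearly over the simplices of $T$. Since $y^{w_v}_{\sigma_v} \in \sigma_v \subseteq \tau_v$, the map $f_T$ sends each face of $P$ into itself, so by the standard face-preserving-map argument used in Komiya's proof (inductively, $f_T|_{\partial P}$ has degree one, whence $f_T$ is surjective onto $P$), some simplex $\Delta_T$ of $T$ satisfies $p \in \conv\{y^{w_v}_{\sigma_v} : v \in \Delta_T\}$.

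The main obstacle is arranging the labeling so that on some such $\Delta_T$ the labels $\{w_v : v \in \Delta_T\}$ form an independent set in $\M$ --- necessarily a basis, since $|\Delta_T| \le k$ and the convex hull condition forces $|\Delta_T| = k$. My intended approach is a matroidal exchange argument exploiting the large freedom in choosing $w_v \in L(v)$: if every qualifying simplex had a matroid-dependent label set, I would apply the augmentation axiom (valid because each $L(v)$ still spans $\M$) to swap some $w_v$ for another element of $L(v)$ outside the existing dependency, monovariantly increasing a potential function --- for instance, the number of simplices of $T$ with rank-$k$ label sets --- until the desired configuration is forced. An alternative route is to invoke or prove a matroidal Sperner/KKM-type lemma in the spirit of Frick--Zerbib's colorful Komiya theorem, extended via matroid intersection machinery; establishing such a combinatorial tool is the heart of the matroidal content and the most delicate ingredient.

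Finally, applying the above to a sequence of triangulations $T_n$ with vanishing mesh produces simplices $\Delta_{T_n}$ with basis labels $\{w_1^n,\dots,w_k^n\}$ and face labels $\sigma_1^n,\dots,\sigma_k^n$. Only finitely many label patterns occur, so a subsequence stabilizes all labels; the vertices of $\Delta_{T_n}$ then converge to a single point $x^* \in P$, which lies in $\bigcap_{i=1}^k A^{w_i}_{\sigma_i}$ by closedness, while continuity preserves $p \in \conv\{y^{w_i}_{\sigma_i} : i \in [k]\}$.
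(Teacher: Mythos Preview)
Your overall architecture---label a triangulation, use a Sperner--Shapley/face-preserving surjectivity argument to locate a simplex whose $y$-images capture $p$, then pass to a limit---matches the paper, and your rank lemma $r(L(v))=k$ is correct and is exactly the leverage the paper exploits. The gap is in your mechanism for forcing $\{w_v:v\in\Delta_T\}$ to be independent. The exchange/potential-function idea cannot succeed on a \emph{fixed} triangulation: take $k=3$, $\M=U_{3,3}$ (so $|W|=3$ and $L(v)=W$ for every $v$), and let $T$ be a triangle with one interior vertex $c$ joined to the three corners. Any labeling $f:V(T)\to W$ repeats a value on some edge through $c$, so at most one of the three maximal simplices carries a basis; no swap sequence pushes your potential beyond $1$ out of $3$, and the surjectivity argument gives you no control over \emph{which} simplex contains $p$. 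Worse, swapping $w_v$ generally changes $y^{w_v}_{\sigma_v}$ and hence the simplicial map itself, so the set of ``qualifying'' simplices is a moving target, and there is no evident monovariant. Your fallback of ``invoking a matroidal Sperner-type lemma'' is a restatement of the goal rather than a strategy.

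What the paper supplies is precisely the step you are missing: instead of relabeling a fixed $T$, it \emph{refines} $T$. A face $F$ is called \emph{bad} if $\{f(v):v\in V(F)\}$ is a circuit of $\M$ (an edge with repeated labels counts). One inserts the barycenter $b_F$ and---using that $r(\cl(f(V(F))))\le k-1$ together with the $\M$-Komiya hypothesis (your rank lemma, applied at $b_F$)---labels $b_F$ by some $w\notin\cl(f(V(F)))$ with $b_F\in A^w_\tau$. This destroys $F$ but may spawn new bad faces through $b_F$; the technical core of the paper is an explicit algorithm that tracks these in queues $Q_j$ (indexed by how many vertices lie outside the already-processed set), bounds the relevant ranks via submodularity, and proves termination at a refinement $T'$ on which \emph{every} face carries an independent label set. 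In your $U_{3,3}$ obstruction the algorithm would subdivide the repeated-label edge, yielding a five-vertex triangulation that does admit a good labeling. Only after this refinement is the Sperner--Shapley step applied and the limiting argument run.
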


\begin{remark}
As is the case for the main results in \cite{floriancolorful2019} and \cite{mcginnis2024sparse} for instance, Theorem \ref{thm:main} is true if all the sets $A_\sigma^w$ are open.
\end{remark}

The KKM theorem is recovered when $P=\Delta_{k-1}$, $\M$ is the matroid of rank 1 consisting of just 1 element, and only the sets indexed by a vertex are nonempty. Komiya's theorem is obtained by again taking $\M$ to be the rank 1 matroid with just 1 element. Theorem \ref{thm:sparseKKM} is obtained by taking $P=\Delta_{k-1}$, $\M$ to be the matroid on ground set $[n]$ whose bases are the $k$-element subsets of $[n]$, and only the sets indexed by a vertex are nonempty.

Another way to deduce Gale's colorful KKM theorem is through a partition matroid. Let $(A_1^j,\dots,A_k^j)$ be closed sets that are KKM covers for each $1\leq j\leq k$. Consider the partition matroid with vertex set the ordered pairs $(i,j)$ where $1\leq i,j \leq k$ and with the $k$ parts $\{(1,j),\dots,(k,j)\}$ for $1\leq j\leq k$. We then take $A^{(i,j)}_i=A_i^j$ and $A^{(i,j)}_\ell = \emptyset$ for $\ell \neq i$. We then apply Theorem \ref{thm:main} to the families of sets
\[
(A^{(i,j)}_1,\dots,A^{(i,j)}_k)_{1\leq i,j \leq k}.
\]
Theorem \ref{thm:sparseKKM} can also be deduced in a similar fashion.

\section{Applications in discrete geometry}

The statement of Theorem \ref{thm:main} bears resemblance to the following generalization of the colorful version of Carath\'eodory's theorem of  B\'ar\'any \cite{barany1982generalization}. This result was observed by Kalai and Meshulam and follows from their topological colorful Helly theorem \cite{kalai2005topologicalhelly}. 

\begin{theorem}\label{thm:matroidColCaratheodory}
    Let $V$ be a finite set of points in $\mathbb{R}^d$, and let $\M$ be a matroid with ground set $V$. If $0\in \conv(G)$ for every $G\subset V$ with $r(V\setminus G) \leq d$, then there exists an independent set $T$ of $\M$ such that $0\in \conv(T)$. 
\end{theorem}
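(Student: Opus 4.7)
The plan is to derive Theorem~\ref{thm:matroidColCaratheodory} as a corollary of Theorem~\ref{thm:main}. First, I would reduce to the case where $\M$ has rank exactly $d+1$: if $r(\M) \leq d$, then taking $G = \emptyset$ in the hypothesis forces $0 \in \conv(\emptyset) = \emptyset$, so the hypothesis cannot be satisfied and the theorem holds vacuously; if $r(\M) > d+1$, one replaces $\M$ by its truncation to rank $d+1$, whose independent sets are the $\M$-independent sets of size at most $d+1$, and whose rank function satisfies $r'(V \setminus G) \le d$ iff $r(V \setminus G) \le d$, so the hypothesis persists. Since any set independent in the truncation is also independent in $\M$, this reduction loses nothing, and we may set $k := r(\M) = d+1$.

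Setting up for Theorem~\ref{thm:main}, I would take $P = \Delta^{d}$ with a distinguished interior point $p$ and fix an affine map $\Phi \colon P \to \mathbb{R}^d$ satisfying $\Phi(p) = 0$. The goal is then to define, for each face $\sigma \in F(P)$ and each $v \in V$, a point $y_\sigma^v \in \sigma$ and a closed set $A_\sigma^v \subseteq P$ so that (a) for every $G \subseteq V$ with $r(V \setminus G) \leq d$, the family $(\bigcup_{v \in G} A_\sigma^v \mid \sigma \in F(P))$ is a Komiya cover of $P$, and (b) $\Phi(y_\sigma^v) = v$ whenever $A_\sigma^v \neq \emptyset$. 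Given such a construction, Theorem~\ref{thm:main} yields a basis $\{w_1, \dots, w_{d+1}\}$ of $\M$ and faces $\sigma_1, \dots, \sigma_{d+1}$ with $\bigcap_i A^{w_i}_{\sigma_i} \neq \emptyset$ and $p \in \conv(y^{w_1}_{\sigma_1}, \dots, y^{w_{d+1}}_{\sigma_{d+1}})$. Since each $A^{w_i}_{\sigma_i}$ is nonempty, property (b) gives $\Phi(y^{w_i}_{\sigma_i}) = w_i$, and applying the affine map $\Phi$ to both sides of the convex hull containment yields $0 = \Phi(p) \in \conv(w_1, \dots, w_{d+1})$; since the basis is independent in $\M$, the set $T = \{w_1, \dots, w_{d+1}\}$ is the desired independent set.

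The main obstacle is the explicit construction of the data $A_\sigma^v$ and $y_\sigma^v$ satisfying (a) and (b) simultaneously. The subsets $G \subseteq V$ with $r(V \setminus G) \leq d$ are precisely the supersets of cocircuits of $\M$, so the hypothesis furnishes, for each cocircuit $C$, an explicit convex combination $0 = \sum_{v \in C} \lambda_v^C v$; these combinations should drive a Sperner-type labeling of $P$ that encodes, at each point $x \in P$, which pairs $(\sigma, v)$ contain $x$ in $A_\sigma^v$. The difficulty is to choose the labeling so that the cover property holds uniformly across all cocircuits, while the $y_\sigma^v$ are forced into $\sigma$ and into the fiber $\Phi^{-1}(v)$. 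I expect this construction to generalize the reduction from the sparse colorful KKM theorem to the sparse colorful Carath\'eodory theorem carried out in \cite{mcginnis2024sparse}, with the simple cardinality-based complement sets there replaced by the richer combinatorial structure of matroid cocircuits.
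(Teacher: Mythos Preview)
Your overall plan---deduce the statement from Theorem~\ref{thm:main} by constructing an $\M$-Komiya cover that encodes the geometry of $V$---is exactly the paper's plan, and your explicit truncation to rank $d+1$ is a correct preliminary step that the paper leaves implicit. The substantive gap is that you have not supplied the construction, and the specific requirement you impose on it makes the construction impossible.

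Your condition (b), that $\Phi(y_\sigma^v)=v$ whenever $A_\sigma^v\neq\emptyset$, is too rigid. With $\Phi$ an affine bijection from the affine span of $\Delta^d$ onto $\mathbb{R}^d$, the preimage $\Phi^{-1}(v)$ is a single point $q_v$, and (b) forces $A_\sigma^v=\emptyset$ unless $q_v\in\sigma$. Now take $\tau$ to be a vertex of $\Delta^d$: the only subface is $\tau$ itself, so the Komiya cover condition for $G$ requires some $v\in G$ with $q_v$ equal to that vertex. For a generic $V$ and a generic cocircuit $G$ there is no such $v$, so (a) and (b) cannot hold simultaneously. The speculation in your last paragraph---that convex combinations witnessing $0\in\conv(C)$ for each cocircuit $C$ should drive a Sperner-type labeling---heads in the wrong direction: the actual construction uses no combinatorial data from $\M$ at all.

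The paper's construction, borrowed from \cite{floriancolorful2019}, fixes both problems at once. Work directly in $\mathbb{R}^d$: take $P$ to be a $d$-polytope containing $0$ in its interior whose faces are angularly small (any two points on a common proper face have nonnegative inner product), and take $p=0$. For each $v\in V$, let $\vec v$ be the ray from $0$ through $v$. For a proper face $\sigma$, if $\vec v$ meets $\sigma$ set $A_\sigma^v=\{x\in P:\langle x,v\rangle\ge 0\}$ and $y_\sigma^v=\sigma\cap\vec v$; otherwise set $A_\sigma^v=\sigma$ and choose $y_\sigma^v\in\sigma$ arbitrarily. The point is that now $y_\sigma^v$ is only a \emph{positive multiple} of $v$, not $v$ itself, so the containment $0\in\conv\{y_{\sigma_i}^{w_i}\}$ still yields $0\in\conv\{w_i\}$, while the half-space definition of $A_\sigma^v$ makes the Komiya cover verification a separation argument driven solely by the hypothesis $0\in\conv(G)$. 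If you relax your (b) to ``$\Phi(y_\sigma^v)$ is a positive scalar multiple of $v$'' and replace your cocircuit-labeling idea with this half-space construction, your outline goes through.
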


We can think of the point set $V$ as being colored by the matroid $\M$, and that Theorem \ref{thm:matroidColCaratheodory} provides a condition guaranteeing the existence a set of points that both contain the origin in its convex hull and is colorful in the sense that the set is independent in $\M$. To avoid potential confusion, $\M$ is not necessarily the realizable matroid corresponding to $V$, it is simply any matroid with $V$ as the ground set. 

In fact, Theorem \ref{thm:main} can be used to prove Theorem \ref{thm:matroidColCaratheodory} and hence it is a proper generalization of the above result. The proof is almost identical to arguments from \cite{floriancolorful2019}, so we provide a sketch of the proof below that points out the key differences.

\begin{proof}[Proof sketch of Theorem \ref{thm:matroidColCaratheodory}]
    We may assume that $\M$ is \textit{loopless} without loss of generality, i.e., there is no element $x$ where $\{x\}$ is dependent in $\M$. We may then also assume $0\notin V$, otherwise we are done since $\{0\}$ is an independent set. 

    We proceed as in the proof of Theorem 3.1 in \cite{floriancolorful2019}. Let $P \subset \mathbb{R}^d$ be a polytope containing $0$ in its interior such that any two points $x$ and $y$ belonging to the same face of $P$ satisfies $\langle x, y \rangle \geq 0$. For a point $v\in V$, let $\vec{v}$ be the ray through $v$ emanating from the origin. For each nonempty face $\sigma$ of $P$ and $v\in V$, we define $A_\sigma^v$ in the following way. We take $A_P^v = \emptyset$ for all $v\in V$, so assume that $\sigma$ is a proper face of $P$. If $\sigma \cap \vec{v} \neq \emptyset$, then we set $A_\sigma^v = \{x\in P \mid \langle x,v \rangle \geq 0\}$ and $y_\sigma^v = \sigma \cap \vec{v}$. Otherwise, we set $A_\sigma^v = \sigma$ and $y_\sigma^v$ to be any point in $\sigma$.

    It follows by the same arguments as in \cite{floriancolorful2019} that these sets satisfy the conditions of Theorem \ref{thm:main} and hence there exist faces $\sigma_1,\dots,\sigma_{d+1}$ and an independent set $\{v_1,\dots,v_{d+1}\}$ of $\M$ such that $\bigcap_{i=1}^{d+1} A_{\sigma_i}^{v_i} \neq \emptyset$ and $0\in \conv\{y_{\sigma_1}^{v_1},\dots,y_{\sigma_{d+1}}^{d+1}\}$. Again following \cite{floriancolorful2019}, the faces $\sigma_1,\dots,\sigma_{d+1}$ must satisfy $\sigma_i \cap \vec{v_i} \neq \emptyset$, and hence, the fact that $0\in \conv\{y_{\sigma_1}^{v_1},\dots,y_{\sigma_{d+1}}^{d+1}\}$ gives us the desired result.
\end{proof}

    A beautiful result due to Holmsen \cite{holmsen2016intersection} is a purely combinatorial generalization of Theorem \ref{thm:matroidColCaratheodory} in which the point set $V\subset \mathbb{R}^d$ is replaced by an oriented matroid. Additionally, a consequence of this generalization is that the conclusion of Theorem \ref{thm:matroidColCaratheodory} still holds even with the weaker condition that $0\in \conv(G)$ for every $G\subset V$ with $r(V\setminus G) \leq d-1$ provided that $r(\M) > d$. This fact was discovered in a weaker form in \cite{holmsen2008surrounding, arocha2009very}.

    The above proof shows that Theorem \ref{thm:main} is another, separate generalization of Theorem \ref{thm:matroidColCaratheodory}. Furthermore, Theorem \ref{thm:main} can be applied to show that a similar phenomenon occurs for matroid colorings in different contexts, for instance, piercing \textit{$d$-intervals}. A $d$-interval is simply the union of $d$ compact intervals on $\mathbb{R}$, and a \textit{separated} $d$-interval is the union of $d$ compact intervals $h_1,\dots,h_d$ such that $h_i \subset (i,i+1)$ for all $1\leq i\leq d$. The \textit{matching number} of a finite family of $d$-intervals $\F$, denoted $\nu(\F)$, is the maximum size of collection of pairwise disjoint $d$-intervals (also called a \textit{matching}) in $\F$. The \textit{covering number} (or \textit{piercing number}) of $\F$, denoted $\tau(\F)$, is the minimum size of a point set $S\subset \mathbb{R}$ that intersects each member of $\F$. It is easy to show that that $\tau(\F) = \nu(\F)$ when $\F$ is a family of $1$-intervals. A problem that has received a significant amount of attention is that of bounding $\tau(\F)$ is terms of $\nu(\F)$ for families of $d$-intervals when $d\geq 2$. The best upper bound is due to Tardos when $d=2$ \cite{tardos1995transversals} and Kaiser for all $d$ \cite{kaiser1997d-intervals}. These proofs and all other known proofs of the following theorem are topological.

    \begin{theorem}[Tardos \cite{tardos1995transversals}, Kaiser \cite{kaiser1997d-intervals}]\label{thm:d-intervals}
        Let $\F$ be a finite family of $d$-intervals. Then $\tau(\F) \leq (d^2-d+1)\nu(\F)$. Moreover, if $\F$ is a finite family of separated $d$-intervals, then $\tau(\F) \leq (d^2-d)\nu(\F)$.
    \end{theorem}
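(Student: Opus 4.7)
The plan is to deduce Theorem~\ref{thm:d-intervals} from Theorem~\ref{thm:main} in the same spirit as the derivation of Theorem~\ref{thm:matroidColCaratheodory}. Let $\F$ be a family of $d$-intervals with matching number $\nu := \nu(\F)$, and suppose for contradiction that $\tau(\F) \geq k := (d^2-d+1)\nu + 1$. Fix an irredundant piercing set $T = \{t_1 < \cdots < t_k\}$ (so that no $(k-1)$-subset of $T$ pierces $\F$), take $P = \Delta_{k-1}$ with vertices $v_1,\dots,v_k$ identified with the points of $T$, and let $p$ be the barycenter of $P$. For each $f \in \F$, the index set $\{j : t_j \in f\} \subseteq [k]$ splits into at most $d$ maximal blocks of consecutive integers, which I call the \emph{runs} of $f$.

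I would take the ground set $W$ of the matroid $\M$ to be the set of all runs across $\F$, and choose a rank-$k$ matroid structure on $W$ tailored to the problem. For each face $\sigma \in F(P)$ and each run $w = (f,\alpha) \in W$, I would define a closed set $A_\sigma^w \subseteq \sigma$ witnessing that $\sigma$ is served by the run $\alpha$ of $f$ (e.g., the subset of $\sigma$ steered by a piecewise-linear deformation toward a vertex of $\alpha$), and set $y_\sigma^w$ to be a vertex of $\sigma$ lying in the run. The two main tasks are then: (i) verify the $\M$-Komiya cover condition, namely, that for every $G \subseteq W$ with $r(W\setminus G) \leq k-1$, the family $\left(\bigcup_{w\in G} A_\sigma^w \mid \sigma \in F(P)\right)$ is a Komiya cover of $P$; and (ii) use the output of Theorem~\ref{thm:main}---a basis $\{w_1,\dots,w_k\}$ of $\M$, faces $\sigma_1,\dots,\sigma_k$ with $\bigcap_i A^{w_i}_{\sigma_i} \neq \emptyset$, and $p \in \conv\{y^{w_i}_{\sigma_i}\}$---to extract a matching of size strictly larger than $\nu$, yielding the desired contradiction. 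The convex hull condition is what should force the basis of runs to be ``spread out'' across pairwise disjoint $d$-intervals.

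The principal obstacle is constructing $\M$ so that the sharp constant $d^2 - d + 1$ actually emerges. The naive partition matroid that groups runs by their parent $d$-interval has rank $n$ and only delivers the weak bound $\tau \leq d\nu$; to recover the correct constant, $\M$ must encode the topological rigidity of the matching complex of $d$-intervals that lies at the heart of the original proofs of Tardos and Kaiser, perhaps via a transversal-type construction whose incidence pattern reflects the connectivity of this complex. For the separated $d$-interval case, one expects to shave one dimension off $P$ (equivalently, lower the rank of $\M$ by one) using the extra combinatorial constraint imposed by the disjoint coordinate windows, which should tighten the constant to $d^2 - d$. Once $\M$ is in place, the verification of the $\M$-Komiya cover condition and the final extraction of a matching should reduce to a finite combinatorial check on how runs interact with faces of $\Delta_{k-1}$.
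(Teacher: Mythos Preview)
Your proposal has a genuine gap: you never construct the matroid $\M$. You acknowledge this yourself as ``the principal obstacle,'' and what follows is speculation (``perhaps via a transversal-type construction,'' ``must encode the topological rigidity of the matching complex'') rather than a definition. Without $\M$ in hand you cannot verify the $\M$-Komiya cover hypothesis, and you cannot carry out the extraction step either, so the argument does not close.

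More importantly, the gap reflects a misconception about where the constant $d^2-d+1$ comes from. In the paper's framework the matroid plays no role in producing that constant; it only supplies the colorful structure. The paper deduces Theorem~\ref{thm:d-intervals} from Theorem~\ref{thm:Cold-intervals} in one line: choose $k$ minimal with $\tau(\F)>k-1$ (respectively $\tau(\F)>(k-1)d$ in the separated case), take $k$ identical copies of $\F$ as the color classes, and apply Theorem~\ref{thm:Cold-intervals} to obtain a rainbow matching of size at least $k/(d^2-d+1)$ (respectively $k/(d-1)$), which bounds $\nu(\F)$ from below. Unwinding the choice of $k$ gives the inequality. The constant $d^2-d+1$ is produced inside the proof of Theorem~\ref{thm:Cold-intervals} (equivalently Theorem~\ref{thm:matroid d-intervals}) by the Frick--Zerbib construction of the sets $A_\sigma^w$ on the simplex and a counting argument on how a $d$-interval meets the pieces of a partition; the matroid used to recover the uncolored statement is the trivial one coming from $k$ copies. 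So there is no need to design a clever matroid on ``runs,'' and no reason to expect that route to yield the sharp constant.
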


    Matou\v sek \cite{matousek2001Lower} showed that the bound in Theorem \ref{thm:d-intervals} is at least nearly asymptotically tight by exhibiting families $\F$ of $d$-intervals for which $\tau(\F)/\nu(\F) = \Omega(d^2/\log^2 d)$.

    More recently, Frick and Zerbib proved the following colorful generalization of Theorem \ref{thm:d-intervals}.

    \begin{theorem}[Frick and Zerbib \cite{floriancolorful2019}]\label{thm:Cold-intervals}
        For $i\in [k]$, let $\F_i$ be a finite family of $d$-intervals and let $\F= \bigcup_{i\in [k]} \F_i$.
        \begin{enumerate}
            \item[(1)] If $\tau(\F_i) > k-1$ for all $i$, then there exists a matching $M$ of $\F$ such that $|M\cap \F_i|\leq 1$ for all $i$ and $|M| \geq \frac{k}{d^2-d+1}$.
            \item[(2)] If each $\F_i$ is a family of separated $d$-intervals and $\tau(\F_i) > (k-1)d$ for all $i$, then there exists a matching $M$ of $\F$ such that $|M\cap \F_i|\leq 1$ for all $i$ and $|M| \geq \frac{k}{d-1}$.
        \end{enumerate}
    \end{theorem}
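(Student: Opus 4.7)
The plan is to mimic the KKM-based proofs of Theorem \ref{thm:d-intervals} by Kaiser and Tardos, and the colorful upgrade in Frick--Zerbib that gives Theorem \ref{thm:Cold-intervals}, but to feed the construction into Theorem \ref{thm:main} using a partition matroid to encode the color constraint. I take $W = \bigsqcup_{i\in[k]} \F_i$ as the ground set of the partition matroid $\M$ whose parts are the color classes $\F_i$ with uniform capacity $1$; then $\M$ has rank $k$, its bases are precisely the rainbow transversals of $(\F_1,\ldots,\F_k)$, and the inequality $r(W\setminus G)\le k-1$ holds if and only if $G$ contains some color class $\F_j$ entirely. This last equivalence is what makes the partition matroid compatible with the $\M$-Komiya hypothesis.

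For part (1), take $P=\Delta^{k-1}$ to be the simplex used in the standard KKM proof of $\tau(\F)\le (d^2-d+1)\nu(\F)$, whose vertices label $k$ piercing points on $\mathbb{R}$. For each $d$-interval $h\in\F$ and face $\sigma\in F(\Delta^{k-1})$, declare $A_\sigma^h$ to be the restriction to $h$ of the $\sigma$-piece of the standard KKM cover, and choose $y_\sigma^h\in\sigma$ arbitrarily (say, the barycenter). If $G\subseteq W$ contains a whole color class $\F_j$, then
\[
\bigcup_{h\in G} A_\sigma^h \;\supseteq\; \bigcup_{h\in \F_j} A_\sigma^h,
\]
and the right-hand side is exactly the classical KKM cover built from $\F_j$, which is genuinely KKM because $\tau(\F_j)>k-1$. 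This verifies the $\M$-Komiya condition, so Theorem \ref{thm:main} yields a rainbow basis $\{h_1,\ldots,h_k\}$, faces $\sigma_1,\ldots,\sigma_k$, and a common point $x\in\bigcap_i A_{\sigma_i}^{h_i}$. The translation from a common intersection to a matching is the standard Kaiser routine: the overlap graph on the $h_i$ has bounded degree, so a greedy independent-set extraction produces a rainbow matching of size at least $k/(d^2-d+1)$.

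Part (2) is carried out identically, with $\Delta^{k-1}$ replaced by the higher-dimensional simplex used in the separated-$d$-interval variant of the KKM proof (with $k$ points per strip), and with the stronger hypothesis $\tau(\F_i)>(k-1)d$ guaranteeing that the cover coming from any single color class $\F_j$ is still KKM. The overlap-graph pigeonhole then returns a rainbow matching of size at least $k/(d-1)$.

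The main obstacle is the decomposition step: one needs to write the classical KKM cover on $\Delta^{k-1}$ (or its separated analog) as a union of honest closed sets $A_\sigma^h$ indexed by individual $d$-intervals $h$, in such a way that (a) the union over a whole color class $\F_j$ recovers the standard cover on the nose, and (b) the face-by-face containment required of a Komiya cover is preserved for every face of $P$, not only for vertices. Once this per-interval refinement is in place, the match between the rank drop $r(W\setminus G)\le k-1$ in the partition matroid and the existence of a fully contained color class $\F_j\subseteq G$ does all the real work, and the rest of the argument is a direct transcription of the Kaiser--Tardos and Frick--Zerbib overlap-graph combinatorics.
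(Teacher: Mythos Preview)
Your approach is essentially the same as the paper's: the paper does not prove Theorem~\ref{thm:Cold-intervals} directly (it is a cited result) but instead states the matroid generalization, Theorem~\ref{thm:matroid d-intervals}, whose proof it defers to a ``straightforward modification'' of the Frick--Zerbib and McGinnis--Zerbib constructions, and then remarks that Theorem~\ref{thm:Cold-intervals} follows by taking $\M$ to be the partition matroid with parts $\F_1,\dots,\F_k$. Your proposal carries out exactly this route --- partition matroid plus Theorem~\ref{thm:main} plus the Kaiser/Frick--Zerbib overlap-graph extraction --- and your correctly identified ``main obstacle'' (the per-interval decomposition of the Komiya cover) is precisely the detail that the paper also leaves to the cited literature.
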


Theorem \ref{thm:d-intervals} can be derived from Theorem \ref{thm:Cold-intervals} in the following way. Let $k$ be the smallest value for which $\tau(\F) > k-1$ (or $\tau(\F) > (k-1)d$ if $\F$ is a family of separated $d$-intervals), then take $k$ copies of $\F$ and apply Theorem \ref{thm:Cold-intervals}. A sparse-colorful version of Theorem \ref{thm:Cold-intervals} was proven in \cite{mcginnis2024sparse}. Theorem \ref{thm:main} can be applied to prove a generalization of these $d$-interval results for matroid colorings of families of $d$-intervals.

\begin{theorem}\label{thm:matroid d-intervals}
    Let $\F$ be a finite family of $d$-intervals, and let $\M$ be a matroid with ground set $\F$ and rank $k$.  
    \begin{enumerate}
        \item[(1)] If $\tau(\F') > k-1$ for all subfamilies $\F'\subset \F$ with $r(\F\setminus \F') \leq k-1$, then there exists a matching $M$ of $\F$ that is independent in $\M$ and $|M| \geq \frac{k}{d^2-d+1}$.

        \item[(2)] If $\F$ is a family of separated $d$-intervals and $\tau(\F') > (k-1)d $ for all subfamilies $\F'\subset \F$ with $r(\F\setminus \F') \leq k-1$, then there exists a matching $M$ of $\F$ that is independent in $\M$ and $|M| \geq \frac{k}{d-1}$.
    \end{enumerate}
\end{theorem}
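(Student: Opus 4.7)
The plan is to follow the template of the proofs of Theorem~\ref{thm:d-intervals} (Kaiser \cite{kaiser1997d-intervals}), Theorem~\ref{thm:Cold-intervals} (Frick and Zerbib \cite{floriancolorful2019}), and the sparse-colorful $d$-interval theorem of \cite{mcginnis2024sparse}, substituting Theorem~\ref{thm:main} for the KKM-type ingredient used in those arguments. For Part~(1), I would take $P=\Delta^{k-1}$; for Part~(2), the analogous $(k-1)$-dimensional polytope (a product of simplices) used in Kaiser's separated-$d$-interval proof. For each $d$-interval $h\in \F$ and each face $\sigma\in F(P)$, define a closed set $A_\sigma^h\subset P$ and a point $y_\sigma^h\in \sigma$ in the standard way: $A_\sigma^h$ records the combinatorial stabbing pattern of $h$ against the ordering encoded by $\sigma$, and $y_\sigma^h$ is the corresponding witness point.

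The crux is to verify the $\M$-Komiya cover hypothesis of Theorem~\ref{thm:main}: for every $G\subset \F$ with $r(\F\setminus G)\leq k-1$, the family $\bigl(\bigcup_{h\in G} A_\sigma^h \mid \sigma\in F(P)\bigr)$ should be a Komiya cover of $P$. The matroid condition $r(\F\setminus G)\leq k-1$ plays exactly the role of the sparsity condition $|I|\geq n-k+1$ used in \cite{soberon2022fair, mcginnis2024sparse}, and the verification proceeds identically: a point in some face $\tau$ left uncovered by $(A_\sigma^h)_{\sigma\subset\tau,\,h\in G}$ would yield a piercing set for $G$ of size at most $k-1$ in Part~(1), respectively $(k-1)d$ in Part~(2), contradicting the hypothesis $\tau(G)>k-1$, respectively $\tau(G)>(k-1)d$.

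Applying Theorem~\ref{thm:main} with $p$ taken to be the barycenter of $P$ then produces a basis $\{h_1,\dots,h_k\}$ of $\M$, faces $\sigma_1,\dots,\sigma_k\in F(P)$, and a common point $x\in \bigcap_{i=1}^k A_{\sigma_i}^{h_i}$ with $p\in \conv\{y_{\sigma_1}^{h_1},\dots,y_{\sigma_k}^{h_k}\}$. Kaiser's topological analysis of such a common intersection point then shows that the ``conflict graph'' on $\{h_1,\dots,h_k\}$ has chromatic number at most $d^2-d+1$ in Part~(1) and at most $d-1$ in Part~(2); a largest color class is thus a pairwise disjoint subfamily of size at least $k/(d^2-d+1)$, respectively $k/(d-1)$, and, being a subset of the basis $\{h_1,\dots,h_k\}$, is automatically independent in $\M$.

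The main obstacle is verifying the $\M$-Komiya cover condition in the previous step from the matroid piercing hypothesis; but since the construction of $A_\sigma^h$ and $y_\sigma^h$ can be imported verbatim from \cite{kaiser1997d-intervals, floriancolorful2019, mcginnis2024sparse}, this reduces to recognizing that the matroid condition $r(\F\setminus G)\leq k-1$ is exactly the right common generalization to make the verification run in the same way as in the sparse-colorful case, which is precisely what Theorem~\ref{thm:main} was designed to accomplish.
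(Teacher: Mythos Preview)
Your proposal is correct and matches the paper's approach exactly: the paper's own proof simply states that the result follows from Theorem~\ref{thm:main} by a straightforward modification of the arguments in \cite{floriancolorful2019} and \cite{mcginnis2024sparse}, and defers entirely to those references. Your write-up is in fact more detailed than the paper's, spelling out the construction of $A_\sigma^h$ and $y_\sigma^h$, the verification of the $\M$-Komiya cover hypothesis, and the concluding conflict-graph/color-class count, all of which is what ``straightforward modification'' means here.
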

\begin{proof}
    This can proven using Theorem \ref{thm:main} by a straightforward modification of the proof of Theorem 1.3 in \cite{floriancolorful2019} or Theorem 1.8 in \cite{mcginnis2024sparse} in a similar way that the proof of Theorem \ref{thm:matroidColCaratheodory} is a straightforward adaption of Theorem 3.1 in \cite{floriancolorful2019}. Hence, we defer the proof to these existing arguments.
\end{proof}
Theorem \ref{thm:Cold-intervals} can be seen to follow from Theorem \ref{thm:matroid d-intervals} by associating the families of $d$-intervals $\F_1,\dots,\F_k$ with the partition matroid with the $\F_i$'s as the parts.

Another geometric application of Theorem \ref{thm:main} is for line piercing problems for convex sets in the plane. Given a family $\F$ of convex sets in $\mathbb{R}^2$, we define the \textit{line piercing number} of $\F$ to be the minimum number $n$ such that there are $n$ lines whose union intersects each set in $\F$. We say that $\F$ has the \textit{$T(k)$ property} if every $k$ sets of $\F$ can be pierced by a line. The following theorem by Eckhoff \cite{eckhoff1969} states that if $\F$ has the $T(4)$ property, then $\F$ has line piercing number 2, which is best possible.

\begin{theorem}[Eckhoff \cite{eckhoff1969}]\label{thm:T4}
    If $\F$ is a finite family of convex sets in $\mathbb{R}^2$ with the $T(4)$ property, then $\F$ has line piercing number at most 2.    
\end{theorem}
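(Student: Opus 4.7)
My plan is to derive Eckhoff's theorem as a corollary of a matroid-colorful strengthening proved via Theorem \ref{thm:main}, mirroring the pattern by which Theorem \ref{thm:d-intervals} is recovered from Theorem \ref{thm:matroid d-intervals}. The target strengthening I have in mind reads: if $\F$ is a finite family of convex sets in $\mathbb{R}^2$, $\M$ is a matroid on $\F$ of rank $k$, and every subfamily $\F' \subset \F$ with $r(\F \setminus \F') \leq k - 1$ has the $T(4)$ property, then there exists a basis of $\M$ piercable by at most $2$ lines. The classical statement is then recovered by choosing $\M$ to be a partition matroid on several copies of $\F$ and taking $k$ as small as possible, exactly as in the $d$-interval deduction described above.

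To prove the matroid version, I would construct an $\M$-Komiya cover on a suitable $2$-dimensional polytope $P$ whose points parameterize (unordered) pairs of lines in $\mathbb{R}^2$ meeting a fixed large convex region that contains all of $\F$. For each $C \in \F$ and each face $\sigma \in F(P)$, I would define $A_\sigma^C$ to be the set of line-pair parameterizations at least one of whose lines meets $C$, suitably cut down by $\sigma$ so that the face-covering condition holds, and take $y_\sigma^C \in \sigma$ to record the geometric position of $C$ relative to a canonical line-pair representing $\sigma$. The distinguished point $p$ demanded by Theorem \ref{thm:main} would be the centroid of $P$, so that the convex-containment conclusion forces nondegenerate output lines. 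Theorem \ref{thm:main} would then output a basis $\{C_1, \dots, C_k\}$, faces $\sigma_1, \dots, \sigma_k$, and a common point $x \in \bigcap_i A_{\sigma_i}^{C_i}$; unpacking, the pair of lines parameterized by $x$ should pierce each $C_i$.

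The main obstacle is the topological subtlety that makes Eckhoff's theorem nontrivial in the first place: the space of lines in $\mathbb{R}^2$ is a nonorientable Möbius strip, and so the space of unordered line-pairs does not admit a clean $2$-dimensional polytopal parameterization that is simultaneously compatible with the $T(4)$ data. I expect that making the $A_\sigma^C$ satisfy the Komiya covering condition on every subfamily $\F' \subset \F$ with $r(\F \setminus \F') \leq k - 1$ will require careful handling of this nonorientability, perhaps by passing to a $\mathbb{Z}/2$-quotient of a double cover or by singling out parallel line-pairs as a distinguished face of $P$. Verifying that the cover condition survives on every such large subfamily is where the full strength of $T(4)$ (as opposed to $T(3)$) should enter, and this is the step I expect to be the most delicate.
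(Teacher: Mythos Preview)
The paper does not supply its own proof of Theorem~\ref{thm:T4}; the result is attributed to Eckhoff, with only a remark that the KKM-based method of \cite{mcginnis2022line} furnishes an alternative argument. The paper's own contribution in this direction is the matroid version, Theorem~\ref{thm:MatroidT4}, whose proof is declared a ``straightforward adaptation'' of \cite{mcginnis2022line} and which, crucially, uses a matroid of rank~$4$.

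Your proposal contains a genuine dimensional gap. In Theorem~\ref{thm:main} the polytope $P$ must be $(k-1)$-dimensional where $k$ is the rank of $\M$, so a $2$-dimensional $P$ forces $k=3$; this is incompatible with your stated intention of allowing arbitrary rank $k$ and deducing the classical statement by a partition-matroid trick. More seriously, the space of unordered pairs of lines in $\mathbb{R}^2$ is $4$-dimensional (each line carries two real parameters), so it cannot be modeled by a $2$-dimensional polytope at all, irrespective of the M\"obius-band nonorientability you correctly flag. The construction that actually underlies Theorem~\ref{thm:MatroidT4} --- inherited from \cite{mcginnis2022line} --- does not attempt to parameterize line-pairs directly; it works on a $3$-dimensional simplex (hence the rank-$4$ matroid), whose points encode auxiliary data from which the two piercing lines are read off. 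Note also that your target matroid statement differs from the paper's: you aim for a basis pierced by two lines, whereas Theorem~\ref{thm:MatroidT4} concludes that two lines pierce a subfamily whose complement has rank at most~$3$. Your proposed sets $A_\sigma^C$, built on a $2$-dimensional line-pair parameter space, cannot be made into an $\M$-Komiya cover because both the dimension and the face structure are wrong for the problem.
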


Eckhoff conjectured that such families $\F$ with the $T(3)$ property have piercing number at most 3 \cite{eckhoff1993gallai}, which would be best possible \cite{eckhofftransversal1973}. This has recently been resolved by McGinnis and Zerbib \cite{mcginnis2022line} using a novel application of the KKM theorem developed in \cite{mcginnis43}.

\begin{theorem}[McGinnis and Zerbib \cite{mcginnis2022line}]\label{thm:T3}
    If $\F$ is a finite family of convex sets in $\mathbb{R}^2$ with the $T(3)$ property, then $\F$ has line piercing number at most 3.
\end{theorem}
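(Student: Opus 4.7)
The plan is to set up a KKM cover on a two-dimensional parameter space whose points encode triples of candidate piercing lines, and then extract a triple that pierces every member of $\F$ from the KKM conclusion. I would begin by parameterizing a family of ordered triples of lines $(\ell_1(x),\ell_2(x),\ell_3(x))$ continuously by points $x$ in the simplex $\Delta^2 = \conv\{v_1,v_2,v_3\}$, where each vertex $v_i$ corresponds to a degenerate configuration (for instance, $\ell_i$ is held fixed while $\ell_j,\ell_k$ for $j,k\neq i$ are forced to coincide with it), and interior points correspond to genuine three-line configurations. The directions and offsets of the $\ell_i(x)$ have to be chosen via a continuous selection rule tailored so that sweeping across $\Delta^2$ realizes enough triples to meet every convex set in $\F$.

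Next, for each $C\in\F$ I would define closed subsets $A_1(C),A_2(C),A_3(C)\subset \Delta^2$, where $A_i(C)$ consists of those $x$ for which $\ell_i(x)$ meets $C$. I would then aim to verify that $(A_1(C),A_2(C),A_3(C))$ is a KKM cover of $\Delta^2$ for every $C$: on each face $\conv\{v_i : i\in I\}$, the $T(3)$ property applied to some triple containing $C$ should force $C$ to be hit by $\ell_i(x)$ for some $i\in I$. With this in place, the KKM theorem produces for every $C$ an intersection point in $\bigcap_{i=1}^3 A_i(C)$, and a compactness argument applied to the common intersection $\bigcap_{C\in\F}\bigcap_i A_i(C)$ should yield a single $x^*\in\Delta^2$ at which every $C\in\F$ is pierced by at least one of $\ell_1(x^*),\ell_2(x^*),\ell_3(x^*)$, proving the theorem.

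The main obstacle is designing the parameterization $x\mapsto (\ell_1(x),\ell_2(x),\ell_3(x))$ so that the KKM boundary conditions genuinely follow from $T(3)$ and not from a stronger hypothesis. At a vertex $v_i$ only one line is active, so that line alone cannot be expected to pierce every $C\in\F$; the degeneracy must be arranged so that on each lower-dimensional face the active lines are globally chosen as extremal transversals of auxiliary subfamilies, in such a way that any candidate $C$ together with those extremal transversals produces a triple of sets to which $T(3)$ can be applied. Turning this geometric bookkeeping into a clean KKM covering—so that the cover conditions are simultaneously satisfiable and strong enough to conclude line piercing number at most $3$—is the heart of the argument and corresponds to the novel KKM technique developed in \cite{mcginnis43}.
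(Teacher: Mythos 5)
You are trying to reprove a theorem that this paper only cites (it is quoted from \cite{mcginnis2022line}, proved there via the KKM technique of \cite{mcginnis43}), so the comparison is with that cited method, whose broad outline — apply a KKM-type theorem to a simplex parameterizing configurations of lines — you have correctly identified. However, your sketch has a genuine logical gap at its final step. With $A_i(C)=\{x:\ell_i(x)\cap C\neq\emptyset\}$ defined per member $C\in\F$, applying the KKM theorem to each cover separately only produces, for each $C$, its own point $x_C$ at which all three lines meet that one set $C$ — a statement that is easy and useless on its own. Nonemptiness of each $\bigcap_{i}A_i(C)$ does not imply nonemptiness of $\bigcap_{C\in\F}\bigcap_i A_i(C)$, and ``compactness'' gives nothing here: compactness upgrades the finite intersection property to a common point, but the finite intersection property across different members of $\F$ is exactly what has not been established. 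To force a single common configuration you would need the intersected sets $A_i=\bigcap_{C\in\F}A_i(C)$ to themselves form a KKM cover, i.e.\ at every point of every face some active line would have to meet \emph{every} member of $\F$; that is far stronger than the $T(3)$ hypothesis and is essentially the conclusion you are trying to prove. The actual argument avoids this by using a single cover whose sets encode a global condition on the whole family (roughly, $x\in A_i$ when no member of $\F$ is ``trapped'' in the part of the configuration associated with vertex $i$), so that one point in the total intersection of one cover directly certifies that the lines form a transversal of all of $\F$; the $T(3)$ property is used to verify the covering condition, by applying it to trapped witness sets and deriving a contradiction.

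The second gap is that the construction you defer to ``geometric bookkeeping'' is the entire content of the proof, and the known construction does not look like yours. In \cite{mcginnis2022line} the parameter space is not $\Delta^2$ with three covering sets but a higher-dimensional simplex: the barycentric coordinates prescribe the lengths of six arcs of a large circle enclosing $\F$, the six division points are joined into three chords, and the covering sets are indexed by the six arcs — which is precisely why Theorem \ref{thm:MatroidT3} of the present paper involves a matroid of rank $6$ and why the rank-$3$ version is stated as an open problem. Your proposed verification (``$T(3)$ applied to some triple containing $C$ should force $C$ to be hit by $\ell_i(x)$ for some $i\in I$'') also does not follow as stated: $T(3)$ only guarantees that a chosen triple has \emph{some} common transversal line, not that this transversal is one of the configuration lines $\ell_i(x)$, and turning that existence statement into the boundary covering condition is exactly the geometric step your proposal leaves unspecified. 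As written, the proposal records a plausible framework but neither its covering definition nor its concluding step can be completed.
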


This same method can be used to provide another proof of Theorem \ref{thm:T4}. Using Theorem \ref{thm:main}, we have a matroid coloring version of Theorem \ref{thm:T4}.

\begin{theorem}\label{thm:MatroidT4}
    Let $\F$ be a finite family of convex sets in $\mathbb{R}^2$, and let $\M$ be a matroid with ground set $\F$ and rank 4. If every 4 sets of $\F$ with rank 4 can be pierced by a line, then there are 2 lines that pierce a subfamily $\F' \subset \F$ such that $r(\F\setminus \F') \leq 3$.
\end{theorem}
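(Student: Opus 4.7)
The plan is to adapt the KKM-based proof of Eckhoff's $T(4)$ theorem (Theorem \ref{thm:T4}) from \cite{mcginnis2022line, mcginnis43}, substituting Theorem \ref{thm:main} for the classical KKM/Komiya theorem used there. First, I would reuse the same $3$-dimensional parameter polytope $P$ with distinguished interior point $p$ as in the proof of Theorem \ref{thm:T4}, whose faces stratify the configuration space of pairs of lines in $\mathbb{R}^2$ according to angular and positional constraints. For each face $\sigma \in F(P)$ and each $C \in \F$, I would define the closed set $A_\sigma^C \subset P$ to consist of those parameter values in $\sigma$ whose associated $2$-line configuration pierces $C$, and assign points $y_\sigma^C \in \sigma$ by the same rule used in the original proof.

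The crux is the verification that $(A_\sigma^C \mid C \in \F,\, \sigma \in F(P))$ is an $\M$-Komiya cover. For every $G \subset \F$ with $r(\F \setminus G) \leq 3$, every basis of $\M$ must intersect $G$, because $\M$ has rank $4$. In the original KKM-based proof of Theorem \ref{thm:T4}, the Komiya cover condition is established by invoking $T(4)$ — every $4$-subfamily of $\F$ is line-pierceable — at specific stages of the combinatorial-topological verification. I would replace each such invocation with our weaker hypothesis: every rank-$4$ $4$-subfamily is line-pierceable, restricted to bases intersecting $G$. Verifying that this restriction still closes the Komiya cover argument is the substance of the adaptation; morally it is analogous to the reduction from Theorem \ref{thm:main} to Theorem \ref{thm:matroidColCaratheodory} sketched above, where the full colorful hypothesis is replaced by the matroid-rank condition.

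With the $\M$-Komiya cover in hand, I would apply Theorem \ref{thm:main} to obtain a basis $\{C_1, C_2, C_3, C_4\}$ of $\M$, faces $\sigma_1, \ldots, \sigma_4$, and a point $x \in \bigcap_{i=1}^4 A_{\sigma_i}^{C_i}$ with $p \in \conv\{y_{\sigma_1}^{C_1}, \ldots, y_{\sigma_4}^{C_4}\}$. This point corresponds to a pair of lines $\ell_1, \ell_2$ in $\mathbb{R}^2$; by the geometric argument of \cite{mcginnis2022line}, $\ell_1 \cup \ell_2$ pierces the subfamily $\F'$ consisting of those $C \in \F$ whose $A_\sigma^C$ contains $x$ for some face $\sigma$. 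The $\M$-Komiya cover property, translated from the failure side, then yields $r(\F \setminus \F') \leq 3$: if $\F \setminus \F'$ had rank $\geq 4$, it would contain a basis, contradicting the way we extracted the piercing from the intersection.

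The main obstacle is the second step — confirming that the Komiya cover verification in \cite{mcginnis2022line, mcginnis43} only requires line-piercings of rank-$4$ $4$-subfamilies, and that for every $G$ with $r(\F \setminus G) \leq 3$ the bases used in the argument can be chosen to meet $G$. A secondary, but more routine, issue is bookkeeping the geometric meaning of the convex-hull condition $p \in \conv\{y_{\sigma_i}^{C_i}\}$, which is what promotes the abstract fixed point to a genuine pair of lines piercing the desired subfamily, exactly as in the $T(4)$ argument.
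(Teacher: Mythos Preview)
Your proposal is correct and takes essentially the same approach as the paper: both amount to adapting the KKM-based proof of Theorem~\ref{thm:T4} from \cite{mcginnis2022line, mcginnis43} by replacing the classical KKM/Komiya step with Theorem~\ref{thm:main}, and both defer the technical verification to those references. The paper's proof is in fact a single sentence to this effect, so your sketch is already more detailed than what appears there; the obstacles you flag (checking the $\M$-Komiya cover condition from the rank-$4$ line-piercing hypothesis, and tracking the convex-hull condition) are exactly the points one must revisit in the adaptation.
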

\begin{proof}
    The proof is a straightforward adaptation of the proof method in \cite{mcginnis2022line}.
\end{proof}

Corresponding to the piercing result for families with the $T(3)$ property, we have the following matroid coloring version of Theorem \ref{thm:MatroidT3}. In this statement the matroid has rank 6, while the natural statement would for matroids of rank 3. Thus, it is an open problem to determine if the matroid of rank 6 below can be replaced by a matroid of rank 3.

\begin{theorem}\label{thm:MatroidT3}
     Let $\F$ be a finite family of convex sets in $\mathbb{R}^2$, and let $\M$ be a matroid with ground set $\F$ and rank 6. If every 3 sets of $\F$ with rank 3 has can be pierced by a line, then there are 3 lines that pierce a subfamily $\F' \subset \F$ such that $r(\F\setminus \F') \leq 5$.
\end{theorem}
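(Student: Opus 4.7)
The plan is to carry out the construction of McGinnis and Zerbib~\cite{mcginnis2022line} for Theorem~\ref{thm:T3}, replacing their invocation of the classical KKM theorem with Theorem~\ref{thm:main}. This parallels how Theorem~\ref{thm:MatroidT4} is obtained from the $T(4)$ argument of~\cite{mcginnis2022line}.

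First, I would reconstruct the $5$-dimensional polytope $P$ used in~\cite{mcginnis2022line}, whose face lattice encodes triples of (possibly partially specified) lines in $\mathbb{R}^2$; the dimension $5$ reflects the six real parameters needed to specify three lines, modulo a normalization, and matches the rank-$6$ hypothesis of $\M$. For each $F\in\F$ and each face $\sigma\in F(P)$, I would then define a closed set $A_\sigma^F\subset P$ and a witness point $y_\sigma^F\in\sigma$ using the rule from~\cite{mcginnis2022line}, so that $y_\sigma^F$ encodes the line configuration associated to $\sigma$ and $A_\sigma^F$ records when this configuration fails to pierce $F$.

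The central verification is that for every $G\subset\F$ with $r(\F\setminus G)\leq 5$, the unions
\[
\Bigl(\bigcup_{F\in G} A_\sigma^F \;\Big|\; \sigma\in F(P)\Bigr)
\]
form a Komiya cover of $P$. In~\cite{mcginnis2022line} the verification reduces to the $T(3)$ property, and in the present setting the hypothesis that every three sets of $\F$ of rank $3$ are pierced by a line is exactly what is needed to perform the same verification on $G$: any triple entering the covering condition with rank $3$ in $\M$ must already lie inside $G$ whenever $r(\F\setminus G)\leq 5$, since otherwise one could adjoin the missing elements back to $G$ to exceed rank $5$ in the complement. I expect this matroidal bookkeeping to be the main technical obstacle, as it is the step where the weakened hypothesis of Theorem~\ref{thm:MatroidT3} has to be matched with the polytope structure inherited from~\cite{mcginnis2022line}.

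Once the $\M$-Komiya cover condition is in place, Theorem~\ref{thm:main} produces a basis $\{F_1,\dots,F_6\}\subset\F$, faces $\sigma_1,\dots,\sigma_6$, a point in $\bigcap_{i=1}^{6}A_{\sigma_i}^{F_i}$, and the convex-combination condition on $\{y_{\sigma_i}^{F_i}\}$. As in the final step of~\cite{mcginnis2022line}, the convex-combination constraint consolidates the six encoded partial configurations into three honest lines $\ell_1,\ell_2,\ell_3$ in $\mathbb{R}^2$, and the intersection condition ensures these three lines pierce a subfamily $\F'\subset\F$ containing the basis $\{F_1,\dots,F_6\}$. Standard matroid arguments, together with the construction of the sets $A_\sigma^F$, then yield $r(\F\setminus\F')\leq 5$, completing the proof.
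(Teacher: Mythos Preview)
Your overall plan coincides with the paper's own proof, which is literally the single sentence that the result follows by a straightforward adaptation of the method in \cite{mcginnis2022line}, substituting Theorem~\ref{thm:main} for the KKM theorem. At that level of detail there is nothing to compare: you and the paper are proposing the same thing.

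However, the one place where you go beyond the paper and supply an actual argument is wrong. Your justification for the $\M$-Komiya-cover verification asserts that ``any triple entering the covering condition with rank $3$ in $\M$ must already lie inside $G$ whenever $r(\F\setminus G)\leq 5$, since otherwise one could adjoin the missing elements back to $G$ to exceed rank $5$ in the complement.'' This is false on both counts: a rank-$3$ independent triple can lie entirely inside $\F\setminus G$ while $r(\F\setminus G)=5$, and adjoining elements to $G$ shrinks the complement, so its rank can only go down, not up. Likewise, in your final step, knowing merely that the three lines pierce a basis $\{F_1,\dots,F_6\}$ does not by itself give $r(\F\setminus\F')\le 5$; removing a basis from a rank-$6$ matroid can leave a set of full rank. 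When you actually carry out the adaptation, both the covering verification and the final rank bound have to come from the specific construction in \cite{mcginnis2022line} together with the $\M$-Komiya-cover machinery, not from the matroidal bookkeeping you sketched. The strategy is right, but that paragraph would need to be rewritten.
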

\begin{proof}
    Again, this follows from a straightforward adaptation of the proof method in \cite{mcginnis2022line}.
\end{proof}

\section{Applications in fair division}

Fair division problems typically ask if a given resource can be ``evenly'' distributed among a group of participants. The resource in question and what it means to evenly distribute can vary depending on the situation. We will be primarily concerned with \textit{envy-free} divisions of a ``cake'' which we associate with the interval $[0,1]$. A partition of $[0,1]$ into $n$ subintervals, and an allocation of the subintervals to $n$ guests is said to be envy-free if every guest prefers their piece at least as much as any other piece. In other words, each guest is not envious of the piece that any other guest received. Such a division and allocation of the cake is called an \textit{envy-free division}. The classical envy-free division theorem proves that such a division exists under mild assumptions on the guests. For the statement below, ``piece $i$'' in  a partition of $[0,1]$ into $n$ subintervals is simply $i$'th subinterval that appears from left to right.

\begin{theorem}[Fair division theorem, Stromquist \cite{stromquisthow1980},  Woodall \cite{woodalldividing1980}, 1980]\label{thm:cfd}
Suppose we have a cake and $n$ guests that each satisfy the following conditions. 
\begin{enumerate}
\item[(1)] {\em The players are hungry}: in every partition  of the cake into $n$ pieces every player prefers at least one positive-length piece.
\item[(2)] {\em The preference sets are closed}: if a player prefers piece $i$ in a converging sequence of partitions, then they prefer piece $i$ also in the limit partition.
\end{enumerate}
Then there exists an envy-free division of the cake. 
\end{theorem}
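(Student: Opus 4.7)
The plan is to reduce Theorem \ref{thm:cfd} to the $n = k$ colorful case of Theorem \ref{thm:main} via the partition-matroid construction spelled out in the paragraph following the statement of Theorem \ref{thm:main}. I would parametrize the space of ordered partitions of $[0,1]$ into $n$ subintervals by $\Delta^{n-1} = \conv\{v_1,\dots,v_n\}$, associating the point $x = (x_1,\dots,x_n)$ (with $x_i \geq 0$ and $\sum_i x_i = 1$) to the partition whose $i$-th piece has length $x_i$. A face $\sigma_S = \conv\{v_i : i \in S\}$ then parametrizes exactly those partitions in which the pieces outside $S$ degenerate to length zero.

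For each player $j \in [n]$ and piece index $i \in [n]$ I would set $A_i^j \subseteq \Delta^{n-1}$ equal to the set of partitions in which player $j$ prefers piece $i$. Condition (2) makes each $A_i^j$ closed, and the key combinatorial step is to check that for every fixed $j$ the family $(A_1^j,\dots,A_n^j)$ is a KKM cover of $\Delta^{n-1}$: on the face $\sigma_S$ the pieces outside $S$ have length zero, so by the hungriness condition (1) player $j$ prefers some piece indexed by $S$, yielding $\sigma_S \subseteq \bigcup_{i \in S} A_i^j$.

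Next I would invoke Theorem \ref{thm:main} with the rank-$n$ partition matroid on the ground set $\{(i,j) : i, j \in [n]\}$ whose parts are $\{(1,j),\dots,(n,j)\}$ for $j \in [n]$, setting $A_{\{v_i\}}^{(i,j)} = A_i^j$ and $y_{\{v_i\}}^{(i,j)} = v_i$ (and all other sets empty), and taking $p$ to be any interior point of $\Delta^{n-1}$. The theorem then produces a basis $\{(i_1,j_1),\dots,(i_n,j_n)\}$ of the partition matroid together with a point $x^\star \in \bigcap_m A_{i_m}^{j_m}$ satisfying $p \in \conv\{v_{i_1},\dots,v_{i_n}\}$. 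The basis condition makes $\{j_1,\dots,j_n\} = [n]$, while the convex-hull condition forces $\{i_1,\dots,i_n\} = [n]$ as well; hence $\pi : j_m \mapsto i_m$ is a bijection of $[n]$, and assigning piece $\pi(j)$ to player $j$ in the partition $x^\star$ is the desired envy-free division.

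The one place requiring real care is the KKM inclusion on the low-dimensional faces $\sigma_S$: hungriness is precisely what prevents a player from preferring only zero-length pieces there, and without it the colorful KKM input would break. The remaining steps are formal consequences of Theorem \ref{thm:main}; in particular, the choice of $p$ in the interior of $\Delta^{n-1}$ is exactly what turns the selected basis into an injective assignment of pieces to players, and so into a genuine allocation where no two players share a piece.
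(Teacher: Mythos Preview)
Your proposal is correct and matches the route the paper indicates: the paper does not give its own proof of Theorem~\ref{thm:cfd} but simply remarks that it ``is the colorful KKM theorem in disguise,'' and you carry out exactly that translation, specializing Theorem~\ref{thm:main} to Gale's colorful KKM via the partition-matroid construction the paper spells out. The only step you leave implicit is the verification that the partition-matroid data form an $\M$-Komiya cover, but this is immediate (any $G$ with $r(W\setminus G)\le n-1$ must contain an entire part $\{(1,j_0),\dots,(n,j_0)\}$, and then the union over $G$ dominates the single KKM cover $(A_1^{j_0},\dots,A_n^{j_0})$), and the paper glosses over it as well.
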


It is well-known that Theorem \ref{thm:cfd} is the colorful KKM theorem in disguise (see \cite{mcginnis2024usingkkmtheorem}). Analogously, Theorem \ref{thm:sparseKKM} can be used to prove a sparse version of Theorem \ref{thm:cfd} as shown in \cite{soberon2022fair}. Along the same lines, we can use Theorem \ref{thm:main} to prove a matroid colorful version. Before stating this result, we will need the following definition, which is an analogue to hungry player condition in Theorem \ref{thm:cfd}.

\begin{definition}
    Suppose we have a group of $n$ players labeled $1$ to $n$, and let $\M$ be a matroid on the ground set $[n]$ with rank $k\leq n$. We say the players are \textit{$\M$-hungry} if for every $G\subset [n]$ with $r([n]\setminus G)\leq k-1$ and any partition of $[0,1]$ into $k$ subintervals, some guest from $G$ prefers a positive-length piece in the partition.
\end{definition}

\begin{theorem}\label{thm:MatroidCake}
    Assume that we have $n$ guests that we label $1$ to $n$, and let $\M$ be a matroid on ground set $[n]$ with rank $k\leq n$. Furthermore, suppose the guests are $\M$-hungry and have closed preference sets. Then there are $k$ guests forming a basis in $\M$ for which there exists an envy-free division of the cake for those players.
\end{theorem}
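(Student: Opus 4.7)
The plan is to encode the envy-free division problem as an $\M$-Komiya cover on the simplex $\Delta_{k-1}$ and then invoke Theorem \ref{thm:main}. Parameterize partitions of $[0,1]$ into $k$ ordered subintervals by $P = \Delta_{k-1} = \conv\{v_1,\dots,v_k\}$, so that the point with barycentric coordinates $(x_1,\dots,x_k)$ corresponds to the partition in which piece $i$ has length $x_i$. For each guest $w \in [n]$ and each vertex $v_i$, let $A^{w}_{v_i} \subset \Delta_{k-1}$ be the set of partitions at which guest $w$ prefers piece $i$; this set is closed by the closed-preference-sets hypothesis. For every non-vertex face $\sigma$ and every $w$, set $A^w_\sigma = \emptyset$. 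Take $y^w_{v_i} := v_i$, let $y^w_\sigma$ be an arbitrary point of $\sigma$ when $\sigma$ is not a vertex, and let $p$ be the barycenter of $\Delta_{k-1}$.

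Next, I will verify that $(A^w_\sigma)$ is an $\M$-Komiya cover. Fix $G \subset [n]$ with $r([n] \setminus G) \leq k-1$; I must show that for every face $\tau = \conv\{v_i : i \in S\}$ of $\Delta_{k-1}$,
\[
\tau \subset \bigcup_{i \in S} \bigcup_{w \in G} A^{w}_{v_i}.
\]
A point of $\tau$ corresponds to a partition whose positive-length pieces are all indexed by $S$, so by the $\M$-hungry hypothesis some guest $w \in G$ prefers a positive-length piece, which must have index in $S$, yielding the inclusion.

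Applying Theorem \ref{thm:main} then produces a basis $\{w_1,\dots,w_k\}$ of $\M$ and faces $\sigma_1,\dots,\sigma_k$ of $\Delta_{k-1}$ with $\bigcap_{i=1}^k A^{w_i}_{\sigma_i} \neq \emptyset$ and $p \in \conv\{y^{w_i}_{\sigma_i} : i\in[k]\}$. Since $A^w_\sigma = \emptyset$ whenever $\sigma$ is not a vertex, each $\sigma_i$ must be a vertex $v_{f(i)}$, and so $y^{w_i}_{\sigma_i} = v_{f(i)}$. The barycenter has all its barycentric coordinates strictly positive, so $p \in \conv\{v_{f(1)},\dots,v_{f(k)}\}$ forces $f : [k] \to [k]$ to be a bijection. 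Any point in $\bigcap_i A^{w_i}_{v_{f(i)}}$ is then a partition of $[0,1]$ at which guest $w_i$ prefers piece $f(i)$ for every $i$, which is exactly an envy-free division of the cake among the basis-guests $w_1,\dots,w_k$.

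The most substantive step is the translation of the $\M$-hungry condition into the $\M$-Komiya cover condition, but the hungry hypothesis has been engineered for precisely this purpose, so it reduces to unwinding definitions. The only other delicate point---ensuring the selected faces collectively cover all $k$ vertices so that each piece is assigned to exactly one basis-guest---is handled automatically by choosing $p$ to be the barycenter, whose positive barycentric coordinates force $f$ to be a bijection.
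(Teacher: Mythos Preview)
Your proof is correct and is precisely the ``straightforward adaptation of known arguments'' that the paper defers to; you encode preferences as a KKM cover on $\Delta_{k-1}$, verify the $\M$-Komiya condition directly from the $\M$-hungry hypothesis, and use the barycenter as $p$ to force the piece-assignment to be a bijection.
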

\begin{proof}
    The proof is a straightforward adaptation of known arguments. See \cite{mcginnis2024usingkkmtheorem} for instance.
\end{proof}

We can also prove an envy-free division result for multiple cakes. Assume now that we have $d$ cakes and we have a partition of each cake into $m$ pieces. A \textit{$d$-tuple of pieces} is simply a choice of one piece from each cake.

\begin{theorem}
    Let $n,m,d$ be positive integers with $d\geq 2$. Suppose we have $n$ guests labeled $1$ to $n$, and let $\M$ be a matroid with ground set $[n]$ and rank $d(m-1)+1$. Furthermore, assume that the preference set of each player is closed and that the following hungry condition is satisfied: For every $G\subset [n]$ with $r([n]\setminus G)\leq d(m-1)$ and for every division of $d$ cakes into $m$ pieces each, there is some guest of $G$ that prefers a non-empty $d$-tuple of pieces. Then there exists $\frac{m}{d-1}$ guests that are independent in $\M$ and an envy-free allocation of pairwise disjoint $d$-tuples of pieces to these players.
\end{theorem}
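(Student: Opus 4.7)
The plan is to recast the multi-cake envy-free division problem as an $\M$-Komiya cover on the polytope $P = (\Delta^{m-1})^d$, whose dimension $d(m-1)$ equals $k-1$ with $k = d(m-1)+1$ the rank of $\M$. The vertices of $P$ are in bijection with $d$-tuples $T = (T_1, \dots, T_d) \in [m]^d$ via $v_T = (e_{T_1}, \dots, e_{T_d})$; I take the point $p$ in the statement of Theorem \ref{thm:main} to be the center $\bigl(\tfrac{1}{m}\mathbf{1}, \dots, \tfrac{1}{m}\mathbf{1}\bigr)$ of $P$.

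For each guest $w \in [n]$ and each $d$-tuple $T$, let $A_{v_T}^w \subset P$ be the set of divisions on which guest $w$ prefers the $d$-tuple $T$; set $A_\sigma^w = \emptyset$ for every non-vertex face $\sigma$ and take $y_{v_T}^w = v_T$. Closedness of preferences makes each $A_{v_T}^w$ closed. To verify the $\M$-Komiya cover property, let $G \subset [n]$ with $r([n]\setminus G) \leq k-1$ and let $\tau \in F(P)$; for a division $x \in \tau$, the positive-length pieces of $x$ are exactly those indexing the vertices of $\tau$, and the $\M$-hungry hypothesis furnishes a guest $w \in G$ preferring some $d$-tuple $T$ with all pieces of positive length, so that $v_T$ is a vertex of $\tau$ and $x \in A_{v_T}^w$.

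Applying Theorem \ref{thm:main} yields a basis $\{w_1, \dots, w_k\}$ of $\M$, $d$-tuples $T_1, \dots, T_k$, and a common division $x^* \in \bigcap_i A_{v_{T_i}}^{w_i}$ with $p \in \conv\{v_{T_1}, \dots, v_{T_k}\}$. Writing $p = \sum_i \lambda_i v_{T_i}$ and projecting onto each $\Delta^{m-1}$ factor gives the uniform-marginal identity $\sum_{i \,:\, T_i^j = \ell} \lambda_i = \tfrac{1}{m}$ for every cake $j \in [d]$ and piece index $\ell \in [m]$. A union-bound argument then forces the family of separated $d$-intervals $\{T_1, \dots, T_k\}$ (place cake $j$ in the interval $(j,j+1)$) to have piercing number at least $m$: any candidate piercing set of size $<m$ would, by summing the corresponding $\lambda$-weights, miss some $T_i$ of positive weight.

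The final step---extracting a matching of at least $\tfrac{m}{d-1}$ pairwise disjoint $d$-tuples from $\{T_1, \dots, T_k\}$---is the main obstacle, since applying Kaiser's theorem (Theorem \ref{thm:d-intervals}(2)) directly to the piercing bound gives only $\tfrac{m}{d(d-1)}$, a factor of $d$ too weak. I plan to adapt the proof of the colorful separated $d$-interval theorem (Theorem \ref{thm:Cold-intervals}(2)) from \cite{floriancolorful2019}: the role played there by the colorful piercing hypothesis is played in our setting by the uniform-marginal condition above, and combining this with Theorem \ref{thm:matroid d-intervals}(2) applied to an appropriately chosen matroid on $\{T_1, \dots, T_k\}$ (induced from the basis $\{w_1, \dots, w_k\}$ of $\M$) should yield the required bound. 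Once such a matching $\{T_{i_1}, \dots, T_{i_s}\}$ is produced, the guests $\{w_{i_1}, \dots, w_{i_s}\}$ form an independent subset of $\M$, and $x^*$ provides the envy-free allocation of the pairwise disjoint $d$-tuples $T_{i_j}$ to the guests $w_{i_j}$.
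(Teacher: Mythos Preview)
Your setup—the polytope $P=(\Delta^{m-1})^d$, the preference sets $A_{v_T}^w$ supported on vertices only, the verification that the $\M$-hungry hypothesis yields an $\M$-Komiya cover, and the application of Theorem~\ref{thm:main} to obtain a basis $\{w_1,\dots,w_k\}$, tuples $T_1,\dots,T_k$, a common division $x^*$, and the uniform-marginal identity—is correct and is exactly the adaptation of \cite{mcginnis2024sparse} the paper has in mind.

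The gap is precisely where you place it, but your proposed repair does not close it. Restricting $\M$ to the basis $\{w_1,\dots,w_k\}$ yields the free matroid $U_{k,k}$; under that matroid the hypothesis of Theorem~\ref{thm:matroid d-intervals}(2) becomes ``$\tau(\F')>(k-1)d$ for every nonempty $\F'\subset\{T_1,\dots,T_k\}$'', which already fails for each singleton $\{T_i\}$, a single separated $d$-interval of piercing number~$1$. No other matroid on $\{T_1,\dots,T_k\}$ fares better: to extract a matching of size $m/(d-1)$ via Theorem~\ref{thm:matroid d-intervals}(2) you would need rank at least $m$ and $\tau(\F')>(m-1)d$ for every $\F'$ of corank at most $m-1$, in particular for $\F'$ equal to the whole family; but piercing every piece of a single cake shows $\tau(\{T_1,\dots,T_k\})\le m\le (m-1)d$ whenever $m,d\ge 2$. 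More conceptually, Theorem~\ref{thm:matroid d-intervals} is itself proved by applying Theorem~\ref{thm:main} to a product of simplices, so invoking it after you have already applied Theorem~\ref{thm:main} is circular and adds no new combinatorial input. The passage from the balanced weights $(\lambda_i)$ to a matching of size $m/(d-1)$ is the substantive combinatorial step in \cite{floriancolorful2019,mcginnis2024sparse}, and it is carried out there by a direct argument on the balanced collection—not by a second KKM-type application; you still need to supply that argument.
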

\begin{proof}
    This is a straightforward adaptation of Theorem 1.12 in \cite{mcginnis2024sparse}.
\end{proof}

\begin{remark}
There is a stronger version of Theorem \ref{thm:cfd} in which the preference set of one of the guests is secrete, which was proven by Woodall \cite{woodalldividing1980}. Another, simpler proof was recently given in \cite{asada2018fair}, and a sparse version was proven in \cite{soberon2022fair}. We leave it as an open problem to formulate and prove a secretive envy-free division theorem for matroid colorings.
\end{remark}

\subsection{Envy-free division in another context}

Theorem \ref{thm:main} allows us to show that there are envy-free divisions exists in different scenarios. In this section, we will consider the following setup. Assume that there are $k$ participants, that we call ``player $i$'' for $1\leq i\leq k$, that have been invited to a cake sharing party. There is also a corresponding graph $G$ with vertex set $[k]$ and edge set $E$, where the existence of an edge $e\in E$ means that the two players corresponding to the endpoints of this edge are willing to share a piece of cake with each other. At this party, the host will bake a cake, slice it into $k-1$ pieces, and serve them to pairs of players that correspond to edges in $G$ (we can think of each piece being assigned to an edge of $G$). Additionally, in order to have a good, inclusive party, the host wants the set of edges that were assigned a piece to form a connected graph (which must be a spanning tree of $G$). Finally, we would like to find an assignment of pieces to edges that is envy-free in the sense that the pair of players corresponding to an edge that was assigned a piece do not prefer another pair's piece over their own. We summarize these notions in the definition below.

\begin{definition}\label{def:edge-payment}
    Let $G$ be a graph with vertex set $[k]$ and assume there are $k$ players corresponding to the vertices of $G$ as in the above discussion. A  \textit{piece-edge allocation} is an allocation of $k-1$ intervals $I_1,\dots,I_{k-1}$, that form a partition of $[0,1]$ (the cake), to edges $e_1,\dots,e_{k-1}$ of $G$ respectively, such that these edges make up a spanning tree of $G$. The piece-edge allocation is said to be \textit{envy-free} if no pair of players corresponding to an edge $e_i$ prefers another pair's piece over their own.
\end{definition}

Theorem \ref{thm:ef-cake-sharing} below provides some natural sufficient conditions for such an envy-free allocation to exist. The first two conditions are analogous to those in Theorem \ref{thm:cfd}. However, in this scenario, it's possible for a pair of players not to prefer any piece in a given partition of the cake. This is why condition (1) in Theorem \ref{thm:ef-cake-sharing} is slightly different from condition (1) in \ref{thm:cfd}. Condition (3) is natural given that we desire the set of edges allocated a piece to form a connected graph (spanning tree). For this condition, we will also need the notion of a \textit{edge cut} of a graph. A set of edges $X\subset E$ is said to be an edge cut of $G$ if the removal of the edges in $X$ yields a disconnected graph.

\begin{theorem}\label{thm:ef-cake-sharing}
    Suppose there are $k$ players and a connected graph $G$ with vertex set $[k]$ and edge set $E$. Suppose that the following conditions are satisfied for pairs of players that correspond to an edge in $G$:
    \begin{enumerate}
        \item[(1)] {\em A hungry-type condition:} if a pair of players prefers some piece in a partition of the cake, this piece will be nonempty.

        \item[(2)] {\em The preference sets are closed:} if a pair of players prefers piece $i$ in a converging sequence of partitions, then they prefer piece $i$ also in the limit partition.

        \item[(3)] {\em Good party:} for any edge cut $X\subset E$ and any partition of the cake into $k-1$ pieces, there is some pair of players corresponding to an edge of $X$ that prefer one of the pieces.
    \end{enumerate}

    Then there exists an envy-free piece-edge allocation.
\end{theorem}
\begin{proof}
    A point $(x_1,\dots,x_{k-1})\in \Delta_{k-2}$ corresponds to a partition of $[0,1]$ into the $k-1$ subintervals $I_j(x) = \left[\sum_{i=1}^{j-1} x_i, \sum_{i=1}^{j} x_i\right)$ for $1\leq j\leq k-1$. We will define a set cover of $\Delta_{k-2}$ with this correspondence in mind with the goal of applying Theorem \ref{thm:main}. The matroid $\M$ in this case will be the \textit{graphic matroid} of rank $k-1$ corresponding to $G$. That is, the matroid with ground set $E$ and whose bases are the sets of edges that form a spanning tree of $G$. Notice that if $X\subset E$ satisfies that the rank of $E\setminus X$ in $\M$ is at most $k-2$, then $X$ is an edge cut of $G$. 

    For $e\in E$ and $1\leq i\leq k-1$, define $A^{e}_i$ to be the set of points $x\in \Delta_{k-2}$ such that the pair of players corresponding to $e$ prefer the piece $I_i(x)$ at least as much as $I_j(x)$ for $1\leq j\leq k-1$ (it is possible that they prefer multiple pieces equally). Conditions (1) - (3) imply that
    \[
    (A^e_i \mid e\in E,\,1\leq i\leq k-1) 
    \]
    is an $\M$-Komiya cover of $\Delta_{k-2}$. Therefore, applying Theorem \ref{thm:main} when $P = \Delta_{k-2}$ and $p$ is for instance the barycenter of $\Delta_{k-2}$, we have that there are edges $e_1,\dots,e_{k-1}$ such that $\bigcap A^{e_i}_i \neq \emptyset$. This means that for any $x\in \bigcap_{i=1}^{k-1} A^{e_i}_i$, the allocation of $I_1(x),\dots,I_{k-1}(x)$ to $e_1,\dots,e_{k-1}$, respectively, is an envy-free piece-edge allocation, which completes the proof.
\end{proof}

\section{Proof of Theorem \ref{thm:main}}

To prove Theorem \ref{thm:main}, we will primarily deal with triangulations of the polytope $P$ with labelings on the vertices, reminiscent of the labeling condition in Sperner's lemma. Specifically, we will work with \textit{Sperner-Shapley} labelings defined below. For a point $v\in P$, $\supp(v)$ is the minimal face of $P$ containing $v$. Recall that $F(P)$ is the set of non-empty faces of $P$.

\begin{definition}
    A \textit{Sperner-Shapley labeling} of a triangulation $T$ of a polytope $P$ is a map
    \[
    \lambda:V(T) \rightarrow F(P)
    \]
    such that $\lambda(v) \subset \supp(v)$ for all $v\in V(T)$.
\end{definition}
We have the following result for Sperner-Shapley labelings, proven in \cite{floriancolorful2019}, that is crucial to our proof.

\begin{theorem}\label{sperner-shapley}
Let $P\subset \mathbb{R}^d$  be a polytope with $p\in P$, and let $T$ be a triangulation of $P$. Let $\lambda: V(T) \rightarrow F(P)$ be a Sperner-Shapley labeling of $T$. Suppose that for every $v\in V(T)$, a point $y(v)\in \lambda(v)$ is assigned.
Then there is a face $\tau$ of $T$ such that $p\in  \conv\{y(v) \mid  v\in V(\tau)\}$.
\end{theorem}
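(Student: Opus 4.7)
The plan is to translate the labeling and the chosen points $y(v)$ into a piecewise-linear continuous self-map of $P$ and show that every point of $P$, and in particular $p$, lies in its image.

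Define $f\colon P\to P$ by $f(v):=y(v)$ for each $v\in V(T)$ and extend affinely on each simplex of $T$: for $x=\sum_i \alpha_i v_i$ in a simplex $\tau=[v_0,\dots,v_k]$ of $T$, set $f(x):=\sum_i \alpha_i\, y(v_i)$. Because each $y(v)\in\lambda(v)\subseteq\supp(v)\subseteq P$ and $P$ is convex, $f$ does map $P$ into $P$; continuity holds because on the common sub-simplex of two simplices of $T$ the two defining expressions coincide. The Sperner-Shapley hypothesis says exactly that $f$ sends every face of $P$ into itself: if a vertex $v\in V(T)$ lies in a face $\sigma$ of $P$, then $\supp(v)\subseteq\sigma$, so $y(v)\in\lambda(v)\subseteq\sigma$, and convexity of $\sigma$ closes up the argument. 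Once $f$ is known to be surjective, the conclusion is immediate: write $p=f(x)$, pick any simplex $\tau$ of $T$ containing $x$, and note that $p\in f(\tau)=\conv\{y(v)\mid v\in V(\tau)\}$.

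To prove surjectivity, I would induct on $d=\dim P$; the case $d=0$ is trivial. For $d\ge 1$, each proper face $\sigma$ of $P$ inherits from $T$ a triangulation $T|_\sigma$ equipped with a Sperner-Shapley labeling (since $\supp_P(v)=\supp_\sigma(v)$ whenever $v\in\sigma$), together with the restricted assignment $v\mapsto y(v)$, so by induction $f|_\sigma$ surjects onto $\sigma$; hence the image of $f$ already contains $\partial P$. For an interior point, I would introduce the straight-line homotopy $H(x,t):=(1-t)x+t\,f(x)$. Convexity of $P$ ensures $H$ takes values in $P$, and convexity of each face $\sigma$ together with $f(\sigma)\subseteq\sigma$ ensures $H(\sigma\times[0,1])\subseteq\sigma$. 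Thus $f$ is homotopic to $\mathrm{id}_P$ as a map of pairs $(P,\partial P)\to(P,\partial P)$, so the induced endomorphism of $H_d(P,\partial P)\cong\mathbb{Z}$ is the identity. If some interior point $p$ were missed by $f$, then $f$ would factor through $P\setminus\{p\}$, which deformation retracts onto $\partial P$, forcing this endomorphism to vanish — a contradiction.

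The main technical step is the pair-homotopy argument producing surjectivity onto the interior: one must invoke the Sperner-Shapley condition twice, first to guarantee $f(\sigma)\subseteq\sigma$ and then again (through the induction on the boundary) to handle boundary points of $P$. The topological input is mild and can be replaced, if desired, by a direct Sperner-style combinatorial count on a sufficiently fine refinement of $T$ that reduces to the classical Sperner lemma; but the pair-homotopy viewpoint seems cleanest because it makes transparent how the labeling hypothesis propagates through the barycentric interpolation defining $f$.
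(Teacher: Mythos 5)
Your argument is correct. The paper does not prove Theorem \ref{sperner-shapley} itself but imports it from Frick and Zerbib \cite{floriancolorful2019}, and your proof is essentially the standard one used there: extend $v\mapsto y(v)$ piecewise affinely over $T$ to a self-map of $P$, use the Sperner--Shapley condition to see that every face of $P$ is mapped into itself, and deduce surjectivity via the straight-line homotopy to the identity (a degree-one/relative-homology argument, with the boundary handled by induction on dimension), so that $p=f(x)$ lies in $\conv\{y(v)\mid v\in V(\tau)\}$ for any simplex $\tau$ of $T$ containing $x$.
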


We first prove the following result about refining a triangulation of $P$ to obtain another triangulation with desirable labeling properties. We will show later that Theorem \ref{thm:main} follows from Theorem \ref{sperner-shapley} and Theorem \ref{goodtriangulation} below by a simple limiting argument. For the proof of Theorem \ref{goodtriangulation} below, we will need to recall a few matroid theoretic definitions and properties. A \textit{circuit} of a matroid $\M$ is a minimal dependent set, i.e., every subset of a circuit is an independent set. The \textit{closure} of a set $A$ is defined as
\[
\textrm{cl}(A) = \{x \mid \textrm{rank}(A) = \textrm{rank}(A\cup \{x\})\}.
\]
Note that rank$(A) = \textrm{rank}(\textrm{cl}(A))$.
Finally, recall that the rank function of a matroid is a \textit{submodular function}, meaning that for two subsets $A$ and $B$ of the ground set, we have that
\[
r(A\cup B) + r(A\cap B) \leq r(A) + r(B).
\]

\begin{theorem}\label{goodtriangulation}
Let $k\ge 2$ be an integer. Let $P$ be a $(k-1)$-dimensional polytope with $p\in P$, and let $\M$ be a matroid of rank $k$ on the ground set $W$ with rank function $r$. Assume that for every  $\tau\in F(P)$, we are given points $\{y_\tau^w \mid w \in W\}$, each contained in $\tau$, and we are given an $\M$-Komiya cover  $(A^w_\tau \mid w\in W, \tau \in F(P))$ of $P$. Let $T$ be a convex triangulation of $P$. 
Then there exists a refinement $T'$ of $T$ and maps $\lambda: V(T')\to F(P)$, $f:V(T')\to W$ and $y:V(T') \to P$ with the following properties: 
\begin{enumerate}
    \item[(P1)] $v\in A^{f(v)}_{\lambda(v)}$ for every $v\in V(T')$, 
    \item[(P2)] $y(v) = y_{\lambda(v)}^{f(v)} \in \lambda(v) \subset \supp(v)$ for every $v\in V(T')$,  and
    \item[(P3)] for every face $\tau$ of $T'$, the elements $\{f(v) : v\in \tau\}$ form a basis of $\M$.  
\end{enumerate}
In particular, $\lambda$ is a Sperner-Shapley labeling.
\end{theorem}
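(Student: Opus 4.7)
The plan is to combine a pointwise consequence of the $\M$-Komiya hypothesis with a vertex-by-vertex inductive labeling of a carefully chosen subdivision $T'$, using matroid exchange to maintain property (P3).

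First I would establish the pointwise lemma that for every $x \in P$ the set
\[
S(x) \;:=\; \{ w \in W : \exists\,\sigma \subset \supp(x) \text{ with } x \in A^w_\sigma \}
\]
has rank $k$ in $\M$. Indeed, if $r(S(x)) \le k-1$, then $G := W \setminus \cl(S(x))$ satisfies $r(W \setminus G) = r(\cl(S(x))) = r(S(x)) \le k-1$, so the $\M$-Komiya hypothesis applied to $G$ yields some $w \in G$ and $\sigma \subset \supp(x)$ with $x \in A^w_\sigma$; but any such $w$ lies in $S(x) \subset \cl(S(x)) = W \setminus G$, a contradiction. Granted the lemma, for any choice of $f(v) \in S(v)$ we may take $\lambda(v) \subset \supp(v)$ to be any face with $v \in A^{f(v)}_{\lambda(v)}$ and set $y(v) := y^{f(v)}_{\lambda(v)}$; conditions (P1), (P2) and the Sperner-Shapley property $\lambda(v) \subset \supp(v)$ then come for free.

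All the real substance therefore lies in the matroid condition (P3). My plan is to take $T'$ to be a sufficiently fine iterated barycentric subdivision of $T$ and to label its vertices in order of increasing $\dim\supp(v)$, maintaining the inductive invariant that at each stage the $f$-labels on any partially-completed top simplex of $T'$ form an independent set of $\M$. When a vertex $v$ is to be labeled, each partially-completed top simplex $\tau \ni v$ contributes a constraint $f(v) \notin \cl(L_\tau)$, where $L_\tau$ is the set of already-chosen labels on $V(\tau) \setminus \{v\}$; for a single $\tau$ this is immediate from the pointwise lemma, since $r(S(v)) = k$ exceeds $r(L_\tau) \le k-1$.

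The main obstacle is that a given $v$ typically belongs to many top simplices at once, and one needs a single $f(v) \in S(v)$ avoiding the union $\bigcup_{\tau \ni v}\cl(L_\tau)$. A finite union of closures of rank-$(k-1)$ sets can in general cover all of $S(v)$ (this happens already in a uniform matroid of rank $2$), so the augmentation is not automatic. My plan for surmounting this is to exploit the rigid chain structure of an iterated barycentric subdivision: the top simplices of $T'$ through a vertex $v = b_\sigma$ correspond to maximal chains of faces of $T$ through $\sigma$, and the ordering by $\dim\supp$ forces the already-labeled portion of any such $\tau$ to consist of barycenters of proper faces of $\sigma$ labeled at earlier stages. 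By choosing those earlier labels with foresight---using submodularity of $r$ to keep the combined rank of all labels attached to faces of $\sigma$ under control, and iterating the subdivision enough times---I aim to reduce the many simultaneous constraints at $v$ to a single matroid augmentation inside $S(v)$, which the pointwise lemma then resolves.
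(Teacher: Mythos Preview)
Your pointwise lemma is correct and is precisely the local ingredient the paper also uses. The gap is in the global step. On a fixed iterated barycentric subdivision, when you reach a vertex $v=b_\sigma$ you must simultaneously avoid the closures of the label sets along \emph{every} maximal chain of proper faces of $\sigma$; as you note, a finite union of rank-$(k-1)$ flats can already cover a rank-$k$ set $S(v)$, so the augmentation is not automatic. Your remedy---``choose the earlier labels with foresight so that the combined rank of all labels on faces of $\sigma$ stays small''---is not an argument: a proper face $\sigma'$ of $\sigma$ is also a face of many other top simplices $\sigma''$, and the rank caps you would need for all of these at once are not compatible in general. Iterating the subdivision only multiplies the number of chains through $v$; it does not shrink the constraint. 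Already for a single $2$-simplex with a rank-$3$ matroid, after one barycentric subdivision the center sits in six top triangles, and six lines in a rank-$3$ matroid can easily cover any prescribed rank-$3$ subset of $W$; nothing in your outline rules this out.

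The paper resolves exactly this difficulty by \emph{not} fixing the refinement in advance. It first labels $V(T)$ arbitrarily so that (P1)--(P2) hold, calls a face \emph{bad} if its $f$-labels form a circuit, and then runs an algorithm that repeatedly performs a stellar subdivision at the barycenter $b_F$ of one bad face $F$. The crucial gain is that the new vertex $b_F$ only has to avoid the closure of a \emph{single} set---the labels on $V(F)$ together with those of the barycenters created earlier along the current recursion chain---and one submodularity computation shows that this set has rank at most $k-1$, so your pointwise lemma furnishes the label. New bad faces can appear, but they all contain $b_F$; the paper tracks them in queues $Q_j$ indexed by how many of their vertices lie outside the already-processed set, proves $Q_j=\emptyset$ for $j\ge k+1$, and shows by a nested induction that all $Q_j$ eventually empty. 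The net effect is that each bad face of the original $T$ is destroyed without creating any surviving new ones, so after finitely many applications $B(T')=\emptyset$ and every maximal simplex carries a basis. The idea you are missing is this adaptive stellar refinement, which converts your many-flat avoidance problem into a sequence of single-flat augmentations.
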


\begin{proof}
The general idea behind Theorem \ref{goodtriangulation} is that if we find a triangulation and maps $\lambda, f,y$ satisfying (P1) - (P3), then by Theorem \ref{sperner-shapley}, we can find a full dimensional face $F$ such that $y(V(F))$ contains $p$ in its convex hull. By (P3), we also have that $f(V(F))$ is a basis of $\M$. Then by taking a sequence of finer triangulations, we obtain a sequence of such full dimensional simplices that converges to a point, and this point will satisfy the conclusion of Theorem \ref{thm:main}

    Let $v\in V(T)$. We may choose an element $w \in W$ and a face $\tau \subset \supp(v)$ such that $v\in A^w_\tau$. Let $\lambda(v)=\tau$, $f(v)=w$ and  $y(v)=y_\tau^w$.

    Note that the maps $f,\,y$, and $\lambda$, already satisfy (P1) and (P2). The goal will be to find a refining triangulation $T'$ of $T$ and extensions of the maps $f,\,y$, and $\lambda$ to the new vertices of $T'$ that will satisfy all properties (P1)-(P3).

    To this end, let $B(T)$ be the set of faces $F$ of $T$ such that $\{f(v)\}_{v\in V(F)}$ is a circuit in $\M$. We say that such a face $F$ is \textit{bad} and $B(T)$ is the set of bad faces. In the case that $\{v_1,\, v_2\}$ is an edge of $T$ and $f(v_1) = f(v_2)$, then $\{v_1,\, v_2\}$ is a bad face. Otherwise, a face with more than 2 vertices where some 2 vertices have the same label under $f$ is not considered bad. Let $F_1\in B(T)$ be a bad face of $T$. We will define an algorithm that takes as input $T$, $f$, $y$, and $\lambda$ and returns a refinement $T'$ of $T$ and assignments $f(v)$, $y(v)$, $\lambda(v)$ for each $v\in V(T')\setminus V(T)$ such that the following holds:
    \begin{itemize}
    \item[(i)] properties (P1) and (P2) hold for $T',\lambda,f,y$, 
    \item[(ii)]  $B(T') \subset B(T)$, and
    \item[(iii)] $F_1$ is not a face of $T'$.
\end{itemize}

In particular $|B(T')| < |B(T)|$. Thus, by applying the algorithm at most $|B(T)|$ times, we will obtain a refinement of $T$ satisfying (P1) - (P3).
Throughout the algorithm below, $T_c$, the current triangulation, is repeatedly updated with finer and finer triangulations until we reach a triangulation with the desired properties.

To describe the algorithm, we will need the following notions. Let $F$ be a face in $T$ with vertex set $\{v_1,\dots,v_i\}$. Let $b_F$ be the barycenter of $F$. We will denote $T(F)$ to be the finer triangulation obtained by adding the vertex $b_F$ and replacing every face of $T$ of the form $\{v_1,\dots,v_i,\dots,v_j\}$ with the $i$ faces $\{b_F,v_1,\dots,\hat{v}_r,\dots,v_i,\dots,v_j\}$ for $1\leq r\leq i$ where $\hat{v}_r$ means $v_r$ is not contained in the set. For a triangulation $T$ with labels $\lambda,f,y$ on its vertices and vertex $v$ of $T$, we define
\[
B(T,v) = \{ F \in B(T) \mid v\in F\}.
\]

{\bf The algorithm. }

Setup:
Set $T_c=T(F_1)$. Let $G_{F_1}= \cl(F_1)$, and note that $r(G_{F_1})\leq k-1$. Therefore, there exists $w \in W\setminus G_{F_1}$ such that $b_{F_1}\in A^w_\tau$ for some face $\tau \subset \supp(b_{F_1})$.
Set $f(b_{F_1})=w$, $\lambda(b_{F_1})=\tau$,  and $y(b_{F_1})=y_\tau^w$. Since $y_\tau^w \in \tau \subset \supp(b_{F_1})$, properties (P1) and (P2) hold for $b_{F_1}$.
Set $H=V(F_1)\cup \{b_{F_1}\}$, and initiate a sequence of vertices $S=({F_1})$ (more faces will be added to the sequence throughout the algorithm). For $j\geq 1$, set
\[
Q_{|F_1|+i}=\{F\in B(T_c;b_{F_1}) \mid |V(F)\setminus H| = i \}.
\]

Apply the following procedure:
\begin{enumerate}
    \item If $Q_j=\emptyset$ for all $j\geq 1$,  stop and return $T_c, f, \lambda,  y$. Otherwise, 
    \item Let $j$ be the largest index for which $Q_j\neq \emptyset$, and choose a face $F\in Q_j$. Set $T_c=T_c(b_F)$, and append $F$ to $S$ on the right. Remove $F$ from $Q_j$. 
  \item Let $H = \bigcup_{F'\in S} \left(V(F') \cup \{b_{F'}\} \right)$, and let $G = \cl(f(H\setminus \{b_{F}\}))$. Choose 
    $w\in W\setminus G$ and $
    \tau \subset \supp(b_F)$ such that  $b_F\in A^w_\tau$ (we will show that such a choice exists).\\
 Set $\lambda(b_F)=\tau$, $f(b_F)=w$,  $y({b_F})=y_{\tau}^w$. Like before, properties (P1) and (P2) hold for $v=b_F$.
    \item If $B(T_c; b_F) \neq \emptyset$, set $Q_{j+i}=\{ F' \in B(T_c;b_F) \mid |V(F') \setminus H| = i \}.$ If $B(T_c; b_F) = \emptyset$, remove $F$ from $S$.
\item Return to Step (1).
\end{enumerate}
See Example \ref{exalgorithm} for a demonstration of the steps in the algorithm for a particular triangulation.

\begin{claim}\label{claim:contains}
    Let $S=({F_1},\dots,{F_i})$ be the sequence obtained after performing step (2) in the algorithm, and let $T_c$ be the triangulation obtained after step (2). Assume that $F_i$, when $i\geq 2$, was chosen from the set $Q_j$. Then each maximal simplex of $T_c$ containing the vertex $b_{F_i}$ contains $j-1$ other vertices from $H = \bigcup_{F\in S} \left(V(F) \cup \{b_{F}\} \right)$.
\end{claim}
\begin{proof}

    We will proceed by induction with the base case being when $i=2$. Note that each maximal face containing $b_{F_1}$ contains $|F_1|-1$ vertices from $V(F_1)$. By definition, the triangulation $T_c$ obtained after step (2) is constructed by replacing every face containing $F_2$ with $|F_2|$ new faces by replacing $V(F_2)$ with $b_{F_2}$ and $|F_2| -1$ vertices from $V(F_2)$. Let $T_c'$ be the triangulation for which $T_c=T_c'(F_2)$, i.e. the triangulation occurring immediately before $T_c$ in the algorithm. Since $b_{F_1} \in F_2$, each maximal face in $T_c'$ containing $F_2$ must also contain $|F_1|-1$ vertices from $V(F_1)$. Now recall that $F_2$ contains $j-|F_1|$ vertices outside of $H'=V(F_1)\cup \{b_{F_1}\}$ by definition of $Q_j$. Thus, each maximal face of $T_c$ containing $b_{F_2}$ either contains $b_{F_1}$ or does not contain $b_{F_1}$. If it does contain $b_{F_1}$, then it contains $b_{F_1}$, $|F_1| -1$ vertices from $V(F_1)$, and $|F_2|-1$ vertices from $V(F_2)$, $j-|F_1|-1$ of which lie outside $H'$. If it does not contain $b_{F_1}$, then it contains $|F_1| -1$ vertices from $V(F_1)$, and $|F_2|-1$ vertices from $V(F_2)$, $j-|F_1|$ of which lie outside $H'$. In both cases, each maximal simplex of $T_c$ that contains $b_{F_2}$ contains $j-1$ vertices from $V(F_1)\cup V(F_2) \cup \{b_{F_1},b_{F_2}\}$.

    The inductive step follows similarly. Assume the statement is true for $i-1$. Let $S'=({F_1},\dots, {F_{i-1}})$, $H'=\bigcup_{F\in S'} \left(V(F) \cup \{b_{F}\} \right)$, and $j'$ be the index for which $F_{i-1}$ was chosen from $Q_{j'}$. Let $T_c'$ be the triangulation occurring immediately before $T_c $ in the algorithm so that $T_c = T_c'(F_i)$. Each maximal simplex of $T_c'$ containing $F_i$ contains $b_{F_{i-1}}$ and hence contains a set $A$ of at least $j'-1$ other vertices from $H'$ (by the inductive hypothesis). Each maximal simplex of $T_c$ containing $b_{F_i}$ either does or does not contain $A\cup \{b_{F_{i-1}}\}$. If it does, then it contains these $j'$ vertices from $H'$ and $|F_i|-1$ vertices from $V(F_i)$, $j-j'-1$ of which are not contained in $H'$. If it does not, then it contains $j'-1$ vertices from $A\cup b_{F_{i-1}}$ and $|F_i|-1$ vertices from $V(F_i)$, $j-j'$ of which are not contained in $H'$. In both cases, there are $j-1$ vertices from $H$ in the maximal simplex containing $b_{F_i}$, and this completes the proof.
\end{proof}

\begin{claim}\label{claim:Qk empty}
    At any iteration in the algorithm, $Q_{j}=\emptyset$ for all $j\geq k+1$.
\end{claim}
\begin{proof}
    Assume that the index $j$ chosen in step (2) is at least $k+1$ in some iteration of the algorithm, and let $S=(F_1,\dots,F_i)$ be the resulting sequence from step (2) as well. By Claim \ref{claim:contains}, each maximal simplex in $T_c$ containing $b_{F_i}$ contains an additional $j-1$ vertices. However, this is impossible since a maximal simplex contains at most $k$ vertices.
\end{proof}

\begin{claim}
The choice of $w,\tau$ in Step (3) of the algorithm is possible as long as the algorithm runs. 
\end{claim}
\begin{proof}
    Let $S=(F_1,\dots,F_i)$ be the sequence obtained from step (2) where $F_i$ was chosen from $Q_j$, and let $H = \bigcup_{F\in S} \left(V(F) \cup \{b_{F}\} \right)$ be the set as in step (3). We will show by induction on $i$ that $r(f(H\setminus \{b_{F_i}\}))\leq j-1$. This will then allow us to prove the claim as we will show. 
    
    For the base case $i=1$, although technically $F_1$ was not chosen from some $Q_j$, we say for the purpose of the inductive step, that $F_1$ was ``chosen'' from $Q_{|F_1|}$. We have that $H= V(F_1) \cup \{b_{F_1}\}$ satisfies $r(f(H\setminus \{b_{F_1}\})) = |F_1|-1$ (since $f(F_1)$ is a circuit). 

    Now, assume the claim holds for $i-1$. Let $S=(F_1,\dots,F_i)$, and let $H = \bigcup_{F\in S} \left(V(F) \cup \{b_{F}\} \right)$ be the set as in step (3). Define $S'=(F_1,\dots,F_{i-1})$ and $H'=\bigcup_{F\in S'} \left(V(F) \cup \{b_{F}\} \right)$. Let $j$ be the index for which $F_i$ was chosen from $Q_j$ in step (2), and let $j'$ be the index for which $F_{i-1}$ was chosen from $Q_{j'}$ (if $i-1=1$, take $j' = |F_1|$). Note that by the definition of $Q_j$, we have that $F_i$ contains $j-j'$ vertices outside of $H'$. 
    By the submodularity of the rank function of a matroid, we have that
    \begin{align*}
    &r(f(H\setminus \{b_{F_i}\}))=r(f(H') \cup f(V(F_i)))\\
    &\leq r(f(H')) + r(f(V(F_i))) - r((f(H') \cap f(V(F_i)))\\
    &\leq j' + |F_i| -1 - (|F_i|- (j-j'))\\
    &= j-1.
    \end{align*}
    Where we have that $r(f(H')) \leq j'$ by the inductive hypothesis, and $r(f(F_i))= |F_i|-1$ since $f(F_i)$ is a circuit. Also, notice that $|V(F_i)\cap H'| = |F_i|- (j-j')$ since $j-j'$ is the number of vertices of $F_i$ outside $H'$. Since $f(V(F_i)\cap H')$ is an independent set, being a proper subset of a circuit, we have that $r(f(V(F_i)\cap H')) = |F_i|- (j-j')$. This concludes the inductive step.
    
    Since $j\leq k$ by Claim \ref{claim:contains}, we conclude that $r(f(H\setminus \{b_{F_i}\})) \leq k-1$. Therefore, taking $G = \cl(f(H\setminus \{b_{F_i}\}))$, there exists some $x\in W\setminus G$ and $\tau \subset \supp(b_{F_i})$ such that $b_{F_i} \in A^x_\tau$. This concludes the proof of the claim.
\end{proof}

\begin{claim}\label{Qj empty}
For any $j\ge 1$, after performing some iteration of the steps (1)-(5) in the algorithm, either $Q_j=\emptyset$ or there is a later iteration of steps (1)-(5) in the algorithm after which  $Q_j = \emptyset$ and the size of $Q_{i}$ for each $1\leq i\leq j-1$ is the same in both iterations.
\end{claim}
\begin{proof}
    We proceed by induction on $k+1-j$. If $k+1-j\leq 0$, then the claim holds by Claim \ref{claim:Qk empty}.

    Let $1\leq j\leq k$.
Assume that after the $i_1$-th iteration of the steps in the algorithm, we have  $Q_j\neq \emptyset$. It suffices to show that there is some later iteration for which the size of $Q_j$ is decreased by $1$ and the size of $Q_i$ for $1\leq i\leq j-1$ remains unchanged. By the induction hypothesis, there exists $i_2\ge i_1$  such that after the steps in the $i_2$-th iteration of the algorithm, $Q_{j+1}=\emptyset$ and the size of $Q_i$ for each $1\leq i\leq j$ is the same as after the $i_1$-th iteration. 
Similarly, we may find $i_3\ge i_2$ such that after the steps in the $i_3$-th iteration of the algorithm, $Q_{j+2}=\emptyset$ and the size of $Q_i$, for every $1\leq i\leq j+1$, is the same as after $i_2$-th iteration (and in particular,  $Q_{j+1}=\emptyset$). Continuing in this way, we find some $i_{k+1-j}\ge i_1$  such that after the steps in the $i_{k+1-j}$-th iteration of the algorithm, $Q_i=\emptyset$ for all $j+1\leq i\leq k$ and the size of $Q_i$ for $1\leq i\leq j$ is the same as after the $i_1$-th iteration. It follows by Claim \ref{claim:Qk empty} that $j$ is the largest index for which $Q_j\neq \emptyset$.
Therefore, in Step (2) of the $i_{k+1-j}+1$-th iteration, we choose some $F\in Q_j$ and set $Q_j=Q_j\setminus \{F\}$, so the size of $Q_j$ decreases by $1$, and the size of $Q_i$ for $1\leq i\leq j-1$ is unchanged. 
\end{proof}

\begin{claim}\label{emptylater}
If after steps (1) - (5) of some iteration of the algorithm $Q_i=\emptyset$ for every $1\leq i\leq j$, then $Q_j=\emptyset$ after the steps in every later iteration of the algorithm.
\end{claim}
\begin{proof}
If $Q_i=\emptyset$ for every $1\leq i\leq j$, then at the next iteration of the algorithm, the index chosen in Step (2) is at least $j+1$. Therefore, it is still the case that $Q_i=\emptyset$ for each $1\leq i\leq j$ in the next iteration. 
\end{proof}

\begin{claim}\label{BadFaces}
After steps (1)-(5) of any iteration of the algorithm, every face of $B(T_c)\setminus B(T)$ is contained in $Q_j$ for some $j$.
\end{claim}
\begin{proof}
We proceed by induction on the number of iterations. The claim is  true in the $0$-th iteration (i.e. the setup), when $T_c=T(F_1)$. 

Let $T_c$ be obtained after the $i$-ith iteration of the algorithm for some $i\geq 0$, and assume the statement holds for $T_c$. Let $T'$ be the triangulation obtained in the $(i+1)$-th iteration, i.e. $T'=T_c(F)$ for some $F$. Let $j$ be the index in Step (2) of the $(i+1)$-th iteration. Then every face of $B(T')\setminus B(T_c)$ contains $b_F$ and hence is placed into $Q_j$ for some $j$ as in step (4). This, combined with the fact that the claim held for $T_c$, implies that the claim holds for $T'$. 
\end{proof}

\begin{claim}\label{terminates}
The algorithm terminates after finitely many steps. The returning triangulation $T'$ satisfies properties (i),(ii) and (iii). 
\end{claim}
\begin{proof}
Applying Claim \ref{Qj empty} for $j=1$, we have that there is some iteration for which $Q_1=\emptyset$. Therefore, by Claim \ref{emptylater}, we have that $Q_1=\emptyset$ at  any later iteration of the algorithm. Now, by Claim \ref{Qj empty}, there is a later iteration in the algorithm for which $Q_1=Q_2=\emptyset$. Applying Claim \ref{emptylater} again, we have that $Q_2=\emptyset$ for every later iteration. Continuing  this way, we get that there is an iteration where $Q_j=\emptyset$ for every $j\geq 1$, and therefore the algorithm  terminates.

By Claim \ref{BadFaces}, every face in  $B(T')\setminus B(T)$ is contained in some $Q_j$. By the above, we have that $Q_j=\emptyset$ for all $j\geq 1$, so $T'$ has no such bad faces. Since $T'$ no longer has the face $F_1$, it follows that $T'$ has strictly less bad faces than $T$. Finally, (i) holds by the definition of $\lambda, f,  y$ throughout the algorithm.
\end{proof}

Claim \ref{terminates} concludes the proof of Theorem \ref{goodtriangulation}.
\end{proof}

\begin{proof}[Proof of Theorem \ref{thm:main}]
We can now prove Theorem \ref{thm:main} by a simple limiting argument. Let $(T_n)_{n\geq 1}$ be a sequence of triangulations of $P$ whose diameters go to 0 with labels $\lambda_n, f_n$, and $y_n$ satisfying properties (P1) - P(3) from Theorem \ref{goodtriangulation}. Such a sequence can be obtained by applying Theorem \ref{goodtriangulation} to sufficiently fine triangulations of $P$. Since $\lambda_n$ is a Sperner-Shapley labeling, there exists a face $\tau_n$ of $T_n$ such that $p\in \conv(\{y_{\lambda_n(v)}^{f_n(v)} \mid v \in V(\tau)\}$, $\{f_n(v) \mid v\in V(\tau_n)\}$ is a basis, and $v\in A_{\lambda_n(v)}^{f_n(v)}$ for all $v \in V(\tau_n)$. By passing to a subsequence, we can assume that the set of triples $\{(f_n(v), y_{\lambda_n(v)}^{f_n(v)}, \lambda_n(v)) \mid v\in V(\tau_n)\}$ is the same for each $n$ and that the sequence of faces $(\tau_n)$ converges to a point $x$. Since the sets are closed, we have that $x\in \bigcap_{v\in V(\tau_1)} A_{\lambda_1(v)}^{f_1(v)}$, which completes the proof.
\end{proof}

\begin{example}\label{exalgorithm}

Consider the triangulation $T$ of a polytope $P$ below with vertices $a,b,c,d$, labeled under $f$ by elements of the realizable rank 3 matroid $\M$ with vectors $V$ below
\[
v_1 = 
\begin{pmatrix}
1\\
1\\
0
\end{pmatrix},\,
v_2 = 
\begin{pmatrix}
-1\\
1\\
0
\end{pmatrix},\,
v_3 = 
\begin{pmatrix}
0\\
1\\
0
\end{pmatrix},\,
v_4 = 
\begin{pmatrix}
0\\
0\\
1
\end{pmatrix}.
\] 
Assume we have an $\M$-Komiya cover of $P$.
In Figure 1A, we see that the only bad face is the edge $\{a,b\}$, so we take $F_1 = \{a,b\}$ in the setup of the algorithm and we set $T_c=T(F_1)$. We then label $b_{F_1}$ under $f$ by an element from $w\in V\setminus \{v_1\}$ such that $b_{F_1} \in A^w_\sigma$ for some $\sigma$; here we labeled it by $v_3$ as in Figure 1B. We then have $S=({F_1})$ and 
\[
Q_3 = \{  \{b_{F_1},a,c\},\, \{b_{F_1},b,c \},\, \{b_{F_1},d\}\}.
\]
In the first iteration of the algorithm, we choose $F = \{b_{F_1},a,c\} \in Q_3$ and remove it from $Q_3$, set $T_c = T_c(F)$, and update $S=({F_1}, F)$. Then we choose an element in $V\setminus \cl(\{v_1,v_3\})$ to label $B_F$ under $f$ for which there is only one choice, $v_1$. In step (4), $B(T_c; b_F) = \emptyset$, so we remove $F$ from $S$, and the $Q_j$'s remain unchanged.

In the second iteration, we have $Q_3 = \{\{b_{F_1},b,c \},\, \{b_{F_1},d\}\}$, and we choose $\{b_{F_1},b,c \}$. The remaining steps of this iteration are then carried out identically to the first iteration, and we arrive at the triangulation in Figure 1C.

For the third iteration, we choose the remaining face $\{b_{F_1},d\}$ from $Q_3$, remove it from $Q_3$ and update $T_c=T_c(\{b_{F_1},d\})$. We choose an element $w\in V\setminus \cl(\{v_1,v_3\})$ such that $b_{\{b_{F_1},d\}}\in A^w_\sigma$ for some $\sigma$, for which $v_1$ is the only choice. In step (4), $B(T_c; b_{\{b_{F_1},d\}})=\emptyset$, so the $Q_j$'s remain unchanged. Finally, we return to step (1) and since $Q_j= \emptyset $ for all $j$, we terminate the algorithm and we are left with the triangulation in Figure 1D.
\begin{figure}[h]\label{fig:main}
    \centering
 \begin{subfigure}{0.4\textwidth}
         \centering
    \begin{tikzpicture}[scale=.6]
    \coordinate (a) at (-2,0);
    \coordinate (b) at (2,0);
    \coordinate (c) at (0,3.464);
    \coordinate (d) at (0,-3.464);
     \fill[black, draw=black, thick] (a) circle (1.5pt) node[black, below left] {$a,v_1$};
    \fill[black, draw=black, thick] (b) circle (1.5pt) node[black, below right] {$b,v_1$};
    \fill[black, draw=black, thick] (c) circle (1.5pt) node[black, above] {$c,v_2$};
    \fill[black, draw=black, thick] (d) circle (1.5pt) node[black, below] {$d,v_3$};
    \draw (a) -- (b);
    \draw (a) -- (c);
    \draw (c) -- (b);
    \draw (a) -- (d);
    \draw (d) -- (b);
    \end{tikzpicture}
        \caption{The given triangulation and $f$ labels. The edge $\{a,b\}$ is bad.}\label{1A}
     \end{subfigure}
     \hfill
         \begin{subfigure}{0.3\textwidth}
         \centering
    \begin{tikzpicture}[scale=.6]
    \coordinate (a) at (-2,0);
    \coordinate (b) at (2,0);
    \coordinate (c) at (0,3.464);
    \coordinate (d) at (0,-3.464);
    \coordinate (e) at (0,0);

    \fill[black, draw=black, thick] (e) circle (1.5pt) node[black, below right] {$v_3$};
    \fill[black, draw=black, thick] (a) circle (1.5pt) node[black, below left] {$a,v_1$};
    \fill[black, draw=black, thick] (b) circle (1.5pt) node[black, below right] {$b,v_1$};
    \fill[black, draw=black, thick] (c) circle (1.5pt) node[black, above] {$c,v_2$};
    \fill[black, draw=black, thick] (d) circle (1.5pt) node[black, below] {$d,v_3$};
    \draw (a) -- (b);
    \draw (a) -- (c);
    \draw (c) -- (b);
    \draw (a) -- (d);
    \draw (d) -- (b);

    \draw (c) -- (e);
    \draw (d) -- (e);
    \end{tikzpicture}
     \caption{The current triangulation and $f$ labels after the setup of the algorithm.}\label{1B}
     \end{subfigure}
         \begin{subfigure}{0.3\textwidth}
    \begin{tikzpicture}[scale=.6]
    \coordinate (a) at (-2,0);
    \coordinate (b) at (2,0);
    \coordinate (c) at (0,3.464);
    \coordinate (d) at (0,-3.464);
    \coordinate (e) at (0,0);
    \coordinate (f) at (-2/3,3.464/3);
    \coordinate (g) at (2/3,3.464/3);

    \fill[black, draw=black, thick] (a) circle (1.5pt) node[black, below left] {$a,v_1$};
    \fill[black, draw=black, thick] (b) circle (1.5pt) node[black, below right] {$b,v_1$};
    \fill[black, draw=black, thick] (c) circle (1.5pt) node[black, above] {$c,v_2$};
    \fill[black, draw=black, thick] (d) circle (1.5pt) node[black, below] {$d,v_3$};
    \fill[black, draw=black, thick] (e) circle (1.5pt) node[black, below right] {$v_3$};
    \fill[black, draw=black, thick] (f) circle (1.5pt) node[black,below] {};
    \fill[black, draw=black, thick] (-2/3-.1,3.464/3 - 0.2) circle (0pt) node[black, below] {$v_4$};
    \fill[black, draw=black, thick] (g) circle (1.5pt) node[black,below] {};
    \fill[black, draw=black, thick] (2/3+.2,3.464/3 - 0.2) circle (0pt) node[black, below] {$v_4$};
    \draw (a) -- (b);
    \draw (a) -- (c);
    \draw (c) -- (b);
    \draw (a) -- (d);
    \draw (d) -- (b);

    \draw (c) -- (e);
    \draw (d) -- (e);

    \draw (f) -- (a);
    \draw (f) -- (e);
    \draw (f) -- (c);

    \draw (g) -- (b);
    \draw (g) -- (e);
    \draw (g) -- (c);
    \end{tikzpicture}
    \caption{The current triangulation and $f$ labels after the first and second iteration of the algorithm.}\label{1C}
     \end{subfigure}
     \hfill
         \begin{subfigure}{0.3\textwidth}
         \centering
    \begin{tikzpicture}[scale=.6]
    \coordinate (a) at (-2,0);
    \coordinate (b) at (2,0);
    \coordinate (c) at (0,3.464);
    \coordinate (d) at (0,-3.464);
    \coordinate (e) at (0,0);
    \coordinate (f) at (-2/3,3.464/3);
    \coordinate (g) at (2/3,3.464/3);
    \coordinate (h) at (0,-3.464/2);

    \fill[black, draw=black, thick] (a) circle (1.5pt) node[black, below left] {$a,v_1$};
    \fill[black, draw=black, thick] (b) circle (1.5pt) node[black, below right] {$b,v_1$};
    \fill[black, draw=black, thick] (c) circle (1.5pt) node[black, above] {$c,v_2$};
    \fill[black, draw=black, thick] (d) circle (1.5pt) node[black, below] {$d,v_3$};
    \fill[black, draw=black, thick] (e) circle (1.5pt) node[black, below right] {$v_3$};
    \fill[black, draw=black, thick] (f) circle (1.5pt) node[black,below] {};
    \fill[black, draw=black, thick] (-2/3-.1,3.464/3 - 0.2) circle (0pt) node[black, below] {$v_4$};
    \fill[black, draw=black, thick] (g) circle (1.5pt) node[black,below] {};
    \fill[black, draw=black, thick] (2/3+.2,3.464/3 - 0.2) circle (0pt) node[black, below] {$v_4$};
    \fill[black, draw=black, thick] (h) circle (1.5pt) node[black,below left] {$v_4$};
    
    \draw (a) -- (b);
    \draw (a) -- (c);
    \draw (c) -- (b);
    \draw (a) -- (d);
    \draw (d) -- (b);

    \draw (c) -- (e);
    \draw (d) -- (e);

    \draw (f) -- (a);
    \draw (f) -- (e);
    \draw (f) -- (c);

    \draw (g) -- (b);
    \draw (g) -- (e);
    \draw (g) -- (c);

    \draw (h) -- (a);
    \draw (h) -- (b);
    \end{tikzpicture}
    \caption{The current triangulation and $f$ values after the third and final iteration of the algorithm.}
         \end{subfigure}
  
\caption{Demonstration of the algorithm from the proof of Theorem \ref{goodtriangulation}.}
\label{fig:ex}
\end{figure}
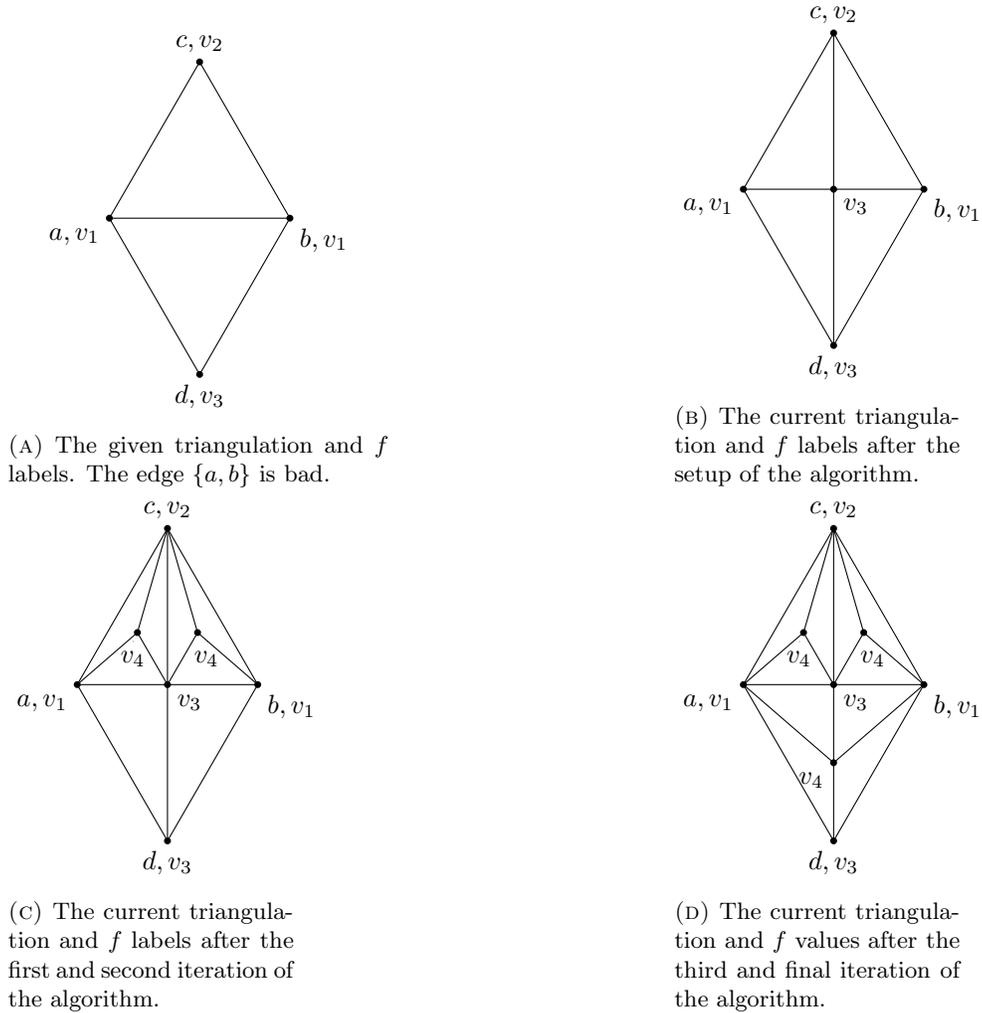

\end{example}
\newpage

\end{document}